\newtheorem{theorem}{Theorem}
\newtheorem*{theorem*}{Theorem}
\newtheorem*{conjecture*}{Conjecture}
\newtheorem{lemma}{Lemma}
\theoremstyle{definition}
\newtheorem{definition}{Definition}
\theoremstyle{remark}
\title{Inhomogeneous Khintchine--Groshev theorem without monotonicity}
\author{Seongmin Kim}
\address{Department of Mathematical Sciences, Seoul National University, 1, Gwanak-ro, Gwanak-gu, Seoul, 08826, Republic of Korea}
\email{seongmin.kim@snu.ac.kr}
\begin{document}

\begin{abstract}
   The Khintchine--Groshev theorem in Diophantine approximation theory says that there is a dichotomy of the Lebesgue measure of sets of $\psi$-approximable numbers, given a monotonic function $\psi$. Allen and Ram\'irez removed the monotonicity condition from the inhomogeneous Khintchine--Groshev theorem for cases with $nm\geq3$ and conjectured that it also holds for $nm=2$. In this paper, we prove this conjecture in the case of $(n,m)=(2,1)$. We also prove it for the case of $(n,m)=(1,2)$ with a rational inhomogeneous parameter.
\end{abstract}

\subjclass{11J20, 11K60, 11J83}

\maketitle

\section{Introduction}
One of the fundamental results in classical Diophantine approximation is Khintchine's theorem \cite{Khi24}. This theorem states that for any monotonic function $\psi: \mathbb{N} \to \mathbb{R}_{\geq 0}$, the Lebesgue measure of the set of $\psi$-approximable numbers, defined by
$$\mathscr{A}_{1,1}(\psi) = \{x \in [0,1) : |qx - p| < \psi(q) \text{ for infinitely many } (q,p) \in \mathbb{N} \times \mathbb{Z}\}$$
is either 0 or 1, depending on whether the series $\sum_{q\in \mathbb{N}} \psi(q)$ converges or diverges. Khintchine later extended this result to the simultaneous approximation case \cite{Khi26}, and Groshev further generalized it to systems of linear forms \cite{Gro38}. In this paper, we focus on the inhomogeneous Khintchine--Groshev theorem, which was established by Sz\"usz \cite{Szu58} and Schmidt \cite{Sch64}.

\subsection{Inhomogeneous Khintchine--Groshev theorem}
For positive integers \\ $n,m \in \mathbb{N}$, a function $\psi : \mathbb{N} \to \mathbb{R}_{\geq 0}$, and an inhomogeneous parameter $\mathbf{y} \in \mathbb{R}^m$, let $\mathscr{A}^\mathbf{y}_{n,m}(\psi)$ denote the set of $\mathbf{x} \in [0,1)^{nm}$ such that
$$|\mathbf{qx} - \mathbf{p} - \mathbf{y}| < \psi(|\mathbf{q}|) \text{ for infinitely many } (\mathbf{q}, \mathbf{p}) \in (\mathbb{Z}^n \setminus \{\mathbf{0}\}) \times \mathbb{Z}^m.$$
Here $\mathbf{q} \in \mathbb{Z}^n \setminus \{\mathbf{0}\}$ and $\mathbf{p} \in \mathbb{Z}^m$ are considered as row vectors, while $\mathbf{x} \in [0,1)^{nm}$ represents an $n \times m$ matrix. The notation $|\cdot|$ refers to the maximum norm, defined as $|(q_1, \cdots, q_n)| = \max_{1\leq i \leq n}|q_i|.$ We also denote the Lebesgue measure by $|\cdot|$. The inhomogeneous Khintchine--Groshev theorem is as follows:
\begin{theorem*}[Inhomogeneous Khintchine--Groshev]
    Let $n,m \in \mathbb{N}$, $\psi : \mathbb{N} \to \mathbb{R}_{\geq 0}$, and $\mathbf{y} \in \mathbb{R}^m$. Then
    $$|\mathscr{A}_{n,m}^\mathbf{y} (\psi)| = \begin{cases}  0  \quad \text{ if } \quad \sum_{q=1}^\infty q^{n-1} \psi(q)^m < \infty, \\ 1 \quad \text{ if } \quad \sum_{q=1}^\infty q^{n-1} \psi(q)^m = \infty, \text{ and }\psi \text{ is monotonic.}    \end{cases}$$
\end{theorem*}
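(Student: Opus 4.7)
My plan separates the two cases. For the convergence direction, I would apply the Borel--Cantelli lemma directly to the sequence of sets
$$E_{\mathbf{q}} = \{\mathbf{x} \in [0,1)^{nm} : |\mathbf{q}\mathbf{x} - \mathbf{p} - \mathbf{y}| < \psi(|\mathbf{q}|) \text{ for some } \mathbf{p} \in \mathbb{Z}^m\},$$
whose lim sup is precisely $\mathscr{A}_{n,m}^{\mathbf{y}}(\psi)$. A routine Fubini computation (fix a coordinate $i$ with $q_i \neq 0$; the inhomogeneous shift $\mathbf{y}$ merely translates the target in $\mathbb{T}^m$ without changing the measure) yields $|E_{\mathbf{q}}| \leq (2\psi(|\mathbf{q}|))^m$. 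Grouping by $|\mathbf{q}| = q$ and using the $O(q^{n-1})$ count of vectors per shell gives $\sum_{\mathbf{q}} |E_{\mathbf{q}}| \ll \sum_{q \geq 1} q^{n-1}\psi(q)^m$, so the convergence hypothesis forces $|\mathscr{A}_{n,m}^{\mathbf{y}}(\psi)| = 0$.

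For the divergence direction under monotonicity, my strategy is a quasi-independence estimate combined with a zero-one law. First, I would organize the vectors $\mathbf{q}$ into dyadic shells $S_N = \{\mathbf{q} : 2^N \leq |\mathbf{q}| < 2^{N+1}\}$ and set $F_N = \bigcup_{\mathbf{q} \in S_N} E_{\mathbf{q}}$. Monotonicity of $\psi$ lets me convert the divergence of $\sum_q q^{n-1}\psi(q)^m$ into $\sum_N |F_N| = \infty$. Next, the crucial step is to establish pairwise quasi-independence
$$|F_N \cap F_M| \leq C \, |F_N| \, |F_M| \quad (N \neq M)$$
with $C$ independent of $N, M$. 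The mechanism is that if $\mathbf{q}\mathbf{x} - \mathbf{p} - \mathbf{y}$ and $\mathbf{q}'\mathbf{x} - \mathbf{p}' - \mathbf{y}$ are simultaneously small, then their \emph{difference} $(\mathbf{q}-\mathbf{q}')\mathbf{x} - (\mathbf{p}-\mathbf{p}')$ is small and purely homogeneous, which reduces the joint measure to classical counting bounds that do not see $\mathbf{y}$. A Chung--Erd\H{o}s form of the divergence Borel--Cantelli lemma then yields $|\limsup F_N| > 0$. To promote positive measure to full measure, I would invoke a zero-one law: the set $\mathscr{A}_{n,m}^{\mathbf{y}}(\psi)$ is invariant under translation of $\mathbf{x}$ by any $\mathbf{v} \in \mathbb{Q}^{nm}$ whose common denominator $Q$ divides all entries of the accepted $\mathbf{q}$'s (one restricts the lim sup to $\mathbf{q}$ with $Q \mid q_i$, which still satisfies the divergence condition by monotonicity), and ergodicity of rational translations on $\mathbb{T}^{nm}$ forces full measure.

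I expect the main obstacle to be the quasi-independence estimate. In the homogeneous setting $\mathbf{y} = \mathbf{0}$, Fourier analysis on $\mathbb{T}^{nm}$ decomposes the indicator functions of $E_{\mathbf{q}}$ cleanly; the inhomogeneous shift introduces phase factors $e^{2\pi i \langle \mathbf{k}, \mathbf{y} \rangle}$ that obstruct this decomposition and force a careful case split based on the size of $\mathbf{q} \pm \mathbf{q}'$. Monotonicity of $\psi$ is used precisely to sum the contributions of the close pairs with a constant uniform in $N, M$; this is also the exact point at which dropping monotonicity (the subject of the present paper) becomes delicate and demands a different approach.
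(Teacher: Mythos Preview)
The paper does not prove this classical theorem; it states it as background, attributing the divergence case to Sz\"usz and Schmidt, and only sketches the convergence case in \S1.2 via Borel--Cantelli. Your convergence argument matches that sketch exactly.

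For the divergence case there is no proof in the paper to compare against, but your proposal contains a genuine gap in the zero-one step. You write that ``ergodicity of rational translations on $\mathbb{T}^{nm}$ forces full measure,'' but translation by a rational vector is periodic, not ergodic: the orbit closure is finite and the invariant $\sigma$-algebra is far from trivial. Your construction only produces, for each fixed $Q$, a set invariant under the \emph{finite} group $\tfrac{1}{Q}\mathbb{Z}^{nm}/\mathbb{Z}^{nm}$, which says nothing about full measure. Letting $Q$ vary does not help directly, because the restricted limsup sets $\mathscr{A}^{Q}$ change with $Q$ and need not be nested in a useful way. The standard route to full measure in this setting (and the one the paper uses for its own stronger theorems, see its Theorem~\ref{thm4} and the lemma following it) is a Lebesgue density argument: one shows that for every ball $W$ and all sufficiently large $|\mathbf{q}|$ one has $|A_{\mathbf{q}}\cap W|\geq c\,|A_{\mathbf{q}}|\,|W|$, which forces the limsup to have positive measure inside every ball and hence full measure. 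Alternatively one can use an inflation argument as in \cite{BV08} with the map $\mathbf{x}\mapsto l\mathbf{x}$, but that too is an ergodic \emph{endomorphism}, not a rational translation. Either way, your current zero-one mechanism needs to be replaced.
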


Monotonicity is necessary in the case $(n,m) =(1,1)$, as shown by the counterexample of Duffin and Schaeffer \cite{DS41} in the homogeneous setting $(y=0)$, and by Ram\'irez \cite{Ram17} in the inhomogeneous setting ($y \in \mathbb{R}$). After a long history of development, Beresnevich and Velani \cite{BV09} removed the monotonicity condition for the homogeneous case when $nm \geq 2$. In the inhomogeneous case, Allen and Ram\'irez \cite{AR22} removed the monotonicity condition for $nm\geq 3$, and they conjectured that it can also be removed for the case $nm =2$ (see \cite{Gal65}, \cite{Spr79}, \cite{Yu21}, and \cite{AR22} for historical details).
\begin{conjecture*}[\cite{AR22}, Conjecture 1]
    Let $(n,m) = (1,2)$ or $(2,1)$, $\psi: \mathbb{N} \to \mathbb{R}_{\geq 0}$, and $\mathbf{y} \in \mathbb{R}^m$. Then
    $$|\mathscr{A}_{n,m}^\mathbf{y} (\psi)| = \begin{cases}  0  \quad \text{ if } \quad \sum_{q=1}^\infty q^{n-1} \psi(q)^m < \infty, \\ 1 \quad \text{ if } \quad \sum_{q=1}^\infty q^{n-1} \psi(q)^m = \infty. \end{cases}$$
\end{conjecture*}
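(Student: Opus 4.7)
The plan is to separate convergence from divergence and, within divergence, to treat the two subcases by different strategies. The convergence half requires no monotonicity: a direct count shows that events at scale $Q=|\mathbf{q}|$ contribute total Lebesgue measure $\ll Q^{n-1}\psi(Q)^m$, so the first Borel--Cantelli lemma yields $|\mathscr{A}_{n,m}^\mathbf{y}(\psi)|=0$ whenever the series converges. All of the substance lies in the divergence half.

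For $(n,m)=(1,2)$ with rational $\mathbf{y}=(a_1/b_1,a_2/b_2)$, the plan is to reduce to the homogeneous $(1,2)$ theorem of Beresnevich--Velani. The change of variables $z_i=b_i x_i$ turns the defining condition $|qx_i-p_i-a_i/b_i|<\psi(q)$ into $|qz_i-p_i'|<b_i\psi(q)$ with $p_i'=b_ip_i+a_i\in a_i+b_i\mathbb{Z}$; the inhomogeneous shift is absorbed at the cost of restricting the numerators $p_i'$ to a fixed arithmetic progression modulo $b_i$. This is exactly a two-dimensional Duffin--Schaeffer set-up, and the homogeneous 2D statement (Pollington--Vaughan, together with the machinery in Beresnevich--Velani) is robust under such congruence restrictions, so it still delivers full measure from the divergence hypothesis $\sum\psi(q)^2=\infty$.

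For $(n,m)=(2,1)$, no analogous algebraic reduction is available because $y\in\mathbb{R}$ cannot be cleared by passing to a sublattice of $\mathbb{Z}^2$. My plan is to apply the divergence Borel--Cantelli lemma in its quasi-independent (Chung--Erd\H{o}s) form. For each primitive direction $(q_1',q_2')$ with $\gcd(q_1',q_2')=1$, the event $|q_1x_1+q_2x_2-p-y|<\psi(|\mathbf{q}|)$ cuts out a family of parallel strips in $[0,1)^2$, and the total measure of these strips at scale $Q$ is $\asymp Q\psi(Q)$, matching the divergence sum. The critical step is to control the pairwise intersection sum $\sum_{i<j}|E_i\cap E_j|\ll(\sum_i|E_i|)^2$, where each intersection is a parallelogram whose area is determined by the angular separation of the two directions and by $\psi$; this reduces the task to careful lattice-point counts in thin angular sectors of $\mathbb{Z}^2$.

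The main obstacle is precisely this quasi-independence estimate in the $(2,1)$ case. In the Allen--Ram\'irez proof for $nm\geq 3$, an extra dimension provides slack that absorbs errors from the non-monotonicity of $\psi$; at $nm=2$ one is at the critical threshold and no such slack is available. I expect the decisive ingredients to be a dyadic decomposition of $\mathbb{Z}^2\setminus\{0\}$ by size and by angular separation, combined with either an inhomogeneous transference argument or an $L^2$/Fourier bound controlling the effect of $y\neq 0$ on the intersection areas, so that the non-monotonicity of $\psi$ does not corrupt the second-moment count. Once quasi-independence is secured, the divergence Borel--Cantelli lemma yields full measure for $\mathscr{A}_{2,1}^y(\psi)$.
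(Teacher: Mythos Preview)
First, note that the paper does \emph{not} prove the full conjecture: the $(1,2)$ case with irrational $\mathbf{y}$ is explicitly left open (see the Remarks). Your proposal also silently restricts to rational $\mathbf{y}$ in the $(1,2)$ case, so neither you nor the paper settles the statement as written.

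For $(n,m)=(2,1)$ your plan has a genuine gap. The decomposition by angular separation handles only the easy half: when $\mathbf{q}\nparallel\mathbf{r}$ the events are exactly independent (Sprind\v{z}uk; Lemma~\ref{notparallel} here), so no estimate is needed. The entire obstruction sits at angular separation zero, i.e.\ $\mathbf{q}\parallel\mathbf{r}$, which after pushing forward by $T_\mathbf{k}$ becomes a one-dimensional overlap $|\tilde A_{1,1}(d,\psi(q))\cap\tilde A_{1,1}(e,\psi(r))|$. For the raw sets $A_\mathbf{q}$ this overlap can be too large to yield QIA when $nm=2$; the paper's key idea---absent from your plan---is to pass to the smaller sets $\tilde A_\mathbf{q}$ defined by the coprimality condition $\gcd(\mathbf{q},\,b_\mathbf{q}p+a_\mathbf{q})=1$, where $(a_\mathbf{q},b_\mathbf{q})$ are Dirichlet approximants of $y$ with accuracy tied to $d=\gcd(\mathbf{q})$. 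This arithmetic choice forces $\tilde A_{1,1}(d,\psi(q))\cap\tilde A_{1,1}(e,\psi(r))=\emptyset$ whenever $\gcd(d,e)\ge 3$ and $r\ge 2d^2$ (Lemma~\ref{mainlemma}), which is what makes the parallel sum tractable. A dyadic or Fourier bound on the original $A_\mathbf{q}$, as you sketch, is not known to work for arbitrary $y$; indeed Hauke's direct approach on $A_\mathbf{q}$ required $y$ to be non-Liouville. Finally, QIA alone gives only positive measure; you also need the upgrade to full measure, which here is not a standard zero--one law but a separate argument (Theorem~\ref{thm4}) showing the $\tilde A_\mathbf{q}$ are quantitatively equidistributed in every open set.

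For $(n,m)=(1,2)$ with $\mathbf{y}\in\mathbb{Q}^2$, your change of variables $z_i=b_ix_i$ is morally right but the assertion that the homogeneous $(1,2)$ theorem is ``robust under such congruence restrictions'' hides real work: the Beresnevich--Velani proof uses the condition $\gcd(q,\mathbf{p})=1$, not an arbitrary arithmetic progression on $\mathbf{p}$, and one must redo the Gallagher overlap estimate and the measure computation in this setting. The paper does exactly this directly, defining $A''_\mathbf{q}$ via $\gcd(\mathbf{q},b\mathbf{p}+\mathbf{a})=1$ and proving the needed overlap bound (Lemma~\ref{rationalqia}) and local equidistribution from scratch; it also supplies an independent zero--one law for rational $\mathbf{y}$ (Theorem~\ref{thm6}). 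Your route could likely be made rigorous, but as stated it defers the substantive step to an unproved robustness claim.
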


Hauke \cite{Hau23} proved this conjecture in the case $(n,m) = (2,1)$ for any non-Liouville irrational inhomogeneous parameter $y$. In this paper, we prove the conjecture for $(n,m)=(2,1)$ with any inhomogeneous parameter.

\begin{theorem}\label{thm1}
    Let $(n,m) = (2,1),\ \psi: \mathbb{N} \to \mathbb{R}_{\geq 0}$ and $y \in \mathbb{R}$. Then
    $$|\mathscr{A}_{2,1}^y (\psi)| = \begin{cases}  0  \quad \text{ if } \quad \sum_{q=1}^\infty q \psi(q) < \infty, \\ 1 \quad \text{ if } \quad \sum_{q=1}^\infty q \psi(q) = \infty. \end{cases}$$
\end{theorem}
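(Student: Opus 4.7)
The convergence direction is routine Borel--Cantelli. For each $\mathbf{q}\in\mathbb{Z}^2\setminus\{\mathbf{0}\}$, set
$$A_\mathbf{q}^y:=\{\mathbf{x}\in[0,1)^2:|\mathbf{q}\cdot\mathbf{x}-p-y|<\psi(|\mathbf{q}|)\text{ for some }p\in\mathbb{Z}\};$$
since $\mathbf{x}\mapsto\mathbf{q}\cdot\mathbf{x}\bmod 1$ is a measure-preserving surjection $\mathbb{R}^2/\mathbb{Z}^2\to\mathbb{R}/\mathbb{Z}$, one gets $|A_\mathbf{q}^y|=2\psi(|\mathbf{q}|)$ exactly whenever $\psi(|\mathbf{q}|)<1/2$, and in particular $|A_\mathbf{q}^y|$ does not depend on $y$. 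Summing over $\mathbf{q}$ gives $\sum_\mathbf{q}|A_\mathbf{q}^y|\asymp\sum_q q\psi(q)$, so the first Borel--Cantelli bound closes the convergence case. For the divergence case, I plan to apply a divergent Borel--Cantelli lemma of Chung--Erd\H{o}s type to $\{A_\mathbf{q}^y\}$; this reduces the problem to establishing the quasi-independence bound
$$\sum_{|\mathbf{q}|,|\mathbf{q}'|\leq Q}|A_\mathbf{q}^y\cap A_{\mathbf{q}'}^y|\;\ll\;\biggl(\sum_{|\mathbf{q}|\leq Q}|A_\mathbf{q}^y|\biggr)^{\!2}\quad\text{uniformly in }Q,$$
after which the standard zero--one law for Khintchine--Groshev-type $\limsup$ sets upgrades positive measure to full measure.

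The structural observation that drives the plan is a dichotomy on pairs. When $\mathbf{q},\mathbf{q}'\in\mathbb{Z}^2$ are linearly independent, the linear map $(\mathbf{q}\cdot,\mathbf{q}'\cdot):\mathbb{R}^2/\mathbb{Z}^2\to(\mathbb{R}/\mathbb{Z})^2$ is a measure-preserving surjection with finite fibres, so $|A_\mathbf{q}^y\cap A_{\mathbf{q}'}^y|=|A_\mathbf{q}^y|\cdot|A_{\mathbf{q}'}^y|$ \emph{exactly}, with no dependence on $y$ whatsoever; the independent-pair part of the double sum is therefore automatically bounded by $(\sum|A_\mathbf{q}^y|)^2$. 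All of the difficulty is concentrated on linearly dependent pairs, which have the form $\mathbf{q}=a\mathbf{v}$, $\mathbf{q}'=a'\mathbf{v}$ for a primitive $\mathbf{v}\in\mathbb{Z}^2$. For such a pair, setting $s:=\mathbf{v}\cdot\mathbf{x}\bmod 1$, the overlap reduces to a one-dimensional inhomogeneous condition: $as$ and $a's$ must simultaneously lie within $\psi(a|\mathbf{v}|)$ and $\psi(a'|\mathbf{v}|)$ of $y$ modulo $1$, and this overlap is large precisely when $(a-a')y$ is abnormally close to an integer. A direct count shows there are only $O(Q^2\log Q)$ dependent pairs with $|\mathbf{q}|,|\mathbf{q}'|\leq Q$, so the task is to verify that their overlaps average out to at most the product $|A_\mathbf{q}^y||A_{\mathbf{q}'}^y|$.

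The hardest part will be controlling this dependent-pair contribution uniformly in $y$. For rational $y=p/b$, the obstruction $\|(a-a')y\|$ vanishes exactly on the arithmetic progression $b\mid(a-a')$, sparse enough that a sublattice reduction to the homogeneous Beresnevich--Velani theorem closes the estimate; for non-Liouville $y$, the power-law bound $\|ky\|\geq C_y k^{-\kappa}$ places the problem squarely within Hauke's framework. The genuinely new case is Liouville $y$, where no uniform power-law bound on $\|ky\|$ is available and individual $k$ may make $\|ky\|$ astronomically small. My plan here is a dyadic localisation: on each window $|\mathbf{q}|\in[Q,2Q]$, choose a continued-fraction convergent $a_k/b_k$ of $y$ with $b_k\leq Q<b_{k+1}$, so that $|y-a_k/b_k|\leq 1/(b_k b_{k+1})\leq 1/(b_k Q)$; replacing $y$ by $a_k/b_k$ perturbs each approximation inequality by at most $Q/(b_k b_{k+1})\leq 1/b_k$, a perturbation that can be absorbed into a slightly enlarged right-hand side, reducing the Liouville window to the rational-$y$ case at denominator $b_k$. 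The central technical step, which I expect to be the main obstacle, is to prove the rational-$y$ quasi-independence bound with a constant that is independent of (or only logarithmically dependent on) the denominator $b_k$, so that reassembling the dyadic windows yields a single absolute constant in the Chung--Erd\H{o}s estimate valid uniformly in $y$.
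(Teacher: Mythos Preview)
Your convergence argument and the independence of non-parallel pairs are correct and match the paper. The divergence plan, however, has a genuine gap at its core: the unmodified sets $A_\mathbf{q}^y$ are \emph{not} quasi-independent on average for rational $y$, so your reduction of the Liouville case to a rational approximant cannot succeed as stated. Already at $y=0$ this fails---it is exactly why Beresnevich--Velani had to pass to the coprime subsets $A'_\mathbf{q}$ in the homogeneous $nm=2$ case. Concretely, for parallel $\mathbf{q}=d\mathbf{k}$, $\mathbf{r}=e\mathbf{k}$ with $e\mid d$, the one-dimensional overlap $|A_{1,1}(d,\psi(q))\cap A_{1,1}(e,\psi(r))|$ can equal the full measure $2\psi(r)$ rather than $O(\psi(q)\psi(r))$, and these contributions do not average out. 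Hence the ``rational-$y$ quasi-independence bound with a constant independent of $b_k$'' that you identify as the central step is not merely hard but false for $b_k=1$. Separately, there is no ``standard zero--one law'' for inhomogeneous Khintchine--Groshev $\limsup$ sets with arbitrary $y\in\mathbb{R}$; the paper establishes one only for rational $y$.

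The paper's fix is to replace $A_\mathbf{q}^y$ by strictly smaller sets $\tilde A_\mathbf{q}$ carrying a coprimality constraint $\gcd(\mathbf{q},\,b_\mathbf{q} p+a_\mathbf{q})=1$, where $a_\mathbf{q}/b_\mathbf{q}$ is a Dirichlet approximant to $y$ with denominator $b_\mathbf{q}\leq\gcd(\mathbf{q})$ (so the approximant is tied to $\gcd(\mathbf{q})$, not to $|\mathbf{q}|$ or to a dyadic window). The key lemma then shows that for parallel $\mathbf{q}=d\mathbf{k}$, $\mathbf{r}=e\mathbf{k}$ with $\gcd(d,e)\geq 3$ and $|\mathbf{r}|\geq 2d^2$, the one-dimensional intersection $\tilde A_{1,1}(d,\psi(q))\cap\tilde A_{1,1}(e,\psi(r))$ is \emph{empty}; the remaining parallel pairs are sparse enough for Hauke's counting argument to give QIA of $(\tilde A_\mathbf{q})$. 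Full measure is then obtained not via a zero--one law but via a direct local-density estimate $|\tilde A_\mathbf{q}\cap U|\geq C_m|\tilde A_\mathbf{q}||U|$ for every open $U$ and all large $|\mathbf{q}|$, which combined with QIA forces $|\limsup\tilde A_\mathbf{q}|=1$. Your dyadic-window idea of approximating $y$ rationally is in spirit related, but the coprimality filter---and its dependence on $\gcd(\mathbf{q})$ rather than on $|\mathbf{q}|$---is the missing ingredient that makes the parallel-pair overlaps controllable.
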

We also prove that, for a rational inhomogeneous parameter $\mathbf{y} \in \mathbb{Q}^m$, the monotonicity condition can be removed in the case $(n,m) = (1,2)$.
\begin{theorem}\label{thm2}
    Let $nm \geq 2,\ \psi: \mathbb{N} \to \mathbb{R}_{\geq 0}$ and $\mathbf{y} \in \mathbb{Q}^m.$ Then
    $$|\mathscr{A}_{n,m}^\mathbf{y} (\psi)| = \begin{cases}  0  \quad \text{ if } \quad \sum_{q=1}^\infty q^{n-1} \psi(q)^m < \infty, \\ 1 \quad \text{ if } \quad \sum_{q=1}^\infty q^{n-1} \psi(q)^m = \infty. \end{cases}$$
\end{theorem}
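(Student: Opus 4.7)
The convergence direction is a direct Borel--Cantelli estimate: for each $\mathbf{q}$ with $|\mathbf{q}|=q$, the set of $\mathbf{x}$ satisfying $|\mathbf{qx}-\mathbf{p}-\mathbf{y}|<\psi(q)$ for some $\mathbf{p}\in\mathbb{Z}^m$ has Lebesgue measure at most $(2\psi(q))^m$; summing over the $\sim q^{n-1}$ such $\mathbf{q}$ and then over $q$ yields $|\mathscr{A}^{\mathbf{y}}_{n,m}(\psi)|\lesssim\sum_q q^{n-1}\psi(q)^m<\infty$, so the measure vanishes.

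For the divergence direction, write $\mathbf{y}=\mathbf{a}/N$ with $\mathbf{a}\in\mathbb{Z}^m$ and $N\in\mathbb{N}$, and reduce to the homogeneous case via the auxiliary function $\Psi(Q)=N\psi(Q/N)$ if $N\mid Q$ and $\Psi(Q)=0$ otherwise. A witness $(\mathbf{q},\mathbf{p})$ for $\mathscr{A}^{\mathbf{y}}_{n,m}(\psi)$ produces the witness $(N\mathbf{q},N\mathbf{p}+\mathbf{a})$ for $\mathscr{A}^{\mathbf{0}}_{n,m}(\Psi)$, giving $\mathscr{A}^{\mathbf{y}}_{n,m}(\psi)\subseteq\mathscr{A}^{\mathbf{0}}_{n,m}(\Psi)$; conversely, a pigeonhole argument on the residue $\mathbf{P}\bmod N$ of witnesses yields
$$\mathscr{A}^{\mathbf{0}}_{n,m}(\Psi)=\bigcup_{\mathbf{a}'\in(\mathbb{Z}/N)^m}\mathscr{A}^{\mathbf{a}'/N}_{n,m}(\psi).$$
Since $\sum_Q Q^{n-1}\Psi(Q)^m=N^{n+m-1}\sum_q q^{n-1}\psi(q)^m$ diverges and $nm\geq 2$, the Beresnevich--Velani homogeneous Khintchine--Groshev theorem without monotonicity gives $|\mathscr{A}^{\mathbf{0}}_{n,m}(\Psi)|=1$.

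It remains to upgrade ``the union has full measure'' to $|\mathscr{A}^{\mathbf{a}/N}_{n,m}(\psi)|=1$ for the specific $\mathbf{a}$ of interest. For any residue $\mathbf{r}\in(\mathbb{Z}/N)^n$ having a coordinate coprime to $N$ and any $\mathbf{a}'\in(\mathbb{Z}/N)^m$, a measure-preserving translation on $\mathbb{T}^{nm}$ by a suitable element of $(1/N)\mathbb{Z}^{nm}$---chosen so that $\mathbf{qv}\equiv\mathbf{a}-\mathbf{a}'\pmod{N}$ for all $\mathbf{q}\equiv\mathbf{r}$---bijects the subset of $\mathscr{A}^{\mathbf{a}/N}_{n,m}(\psi)$ comprising $\mathbf{x}$ with infinitely many witnesses satisfying $\mathbf{q}\equiv\mathbf{r}\pmod{N}$ onto the analogous subset of $\mathscr{A}^{\mathbf{a}'/N}_{n,m}(\psi)$, so the residue-restricted sets have equal measure. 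Combined with a 0-1 law for $\mathscr{A}^{\mathbf{y}}_{n,m}(\psi)$ (which one establishes via tail-invariance of the limsup set under appropriate rational shifts) and a Weyl-equidistribution argument showing that the winning residue $\mathbf{P}\bmod N$ of witnesses for $\mathscr{A}^{\mathbf{0}}_{n,m}(\Psi)$ visits every class in $(\mathbb{Z}/N)^m$ infinitely often for almost every $\mathbf{x}$, this yields $|\mathscr{A}^{\mathbf{a}'/N}_{n,m}(\psi)|=1$ for every $\mathbf{a}'$.

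The principal obstacle lies in controlling residues $\mathbf{r}$ with every coordinate divisible by $N$---in particular $\mathbf{r}\equiv\mathbf{0}\pmod{N}$, where the shift symmetry above degenerates and the witnesses with $N\mid\mathbf{q}$ componentwise must be treated separately, e.g.\ by iterating the $\Psi$-reduction. A cleaner alternative, sidestepping this case analysis, is to adapt the Gallagher-type second-moment method of Beresnevich--Velani directly to the inhomogeneous rational setting: the key overlap estimate $|E^{\mathbf{y}}_{\mathbf{q}}\cap E^{\mathbf{y}}_{\mathbf{q}'}|\lesssim|E^{\mathbf{y}}_{\mathbf{q}}||E^{\mathbf{y}}_{\mathbf{q}'}|$ plus a summable correction extends from $\mathbf{y}=\mathbf{0}$ to rational $\mathbf{y}=\mathbf{a}/N$ because the shift multiplies the relevant lattice-point counts only by a bounded factor depending on $N$; Chung--Erd\H{o}s then produces positive measure, and the 0-1 law completes the proof.
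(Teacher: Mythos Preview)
Your convergence argument is fine and matches the paper. For divergence, both routes you sketch have genuine gaps.

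\textbf{The reduction to the homogeneous case.} The claimed equality $\mathscr{A}^{\mathbf{0}}_{n,m}(\Psi)=\bigcup_{\mathbf{a}'}\mathscr{A}^{\mathbf{a}'/N}_{n,m}(\psi)$ fails for $n\geq 2$: a homogeneous witness $(\mathbf{Q},\mathbf{P})$ with $N\mid|\mathbf{Q}|$ need not satisfy $N\mid\mathbf{Q}$ componentwise (e.g.\ $\mathbf{Q}=(6,5)$, $N=3$), so it does not factor as $N\mathbf{q}$. Granted, the genuinely new case here is $n=1$, where the decomposition does hold; but even there the upgrade from ``some $\mathscr{A}^{\mathbf{a}'/N}$ has positive measure'' to ``the specific $\mathscr{A}^{\mathbf{a}/N}$ does'' is not secured. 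Your translation argument only equates the $\mathbf{r}$-restricted pieces for residues $\mathbf{r}$ with a coordinate coprime to $N$; nothing forces the positive mass picked out by pigeonhole to sit over such an $\mathbf{r}$. The ``Weyl equidistribution'' of $\mathbf{P}\bmod N$ is asserted without justification---for general $\psi$ there is no reason the successful $\mathbf{P}$ should equidistribute---and iterating the $\Psi$-reduction for bad residues does not obviously terminate. You yourself flag this obstacle as unresolved.

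\textbf{The direct second-moment route.} This is indeed the paper's approach, and it works, but not for the reason you give. The overlap estimate $|E^{\mathbf{y}}_{\mathbf{q}}\cap E^{\mathbf{y}}_{\mathbf{r}}|\ll|E^{\mathbf{y}}_{\mathbf{q}}||E^{\mathbf{y}}_{\mathbf{r}}|$ for parallel $\mathbf{q},\mathbf{r}$ is \emph{false} for the unrestricted sets $A_\mathbf{q}$ when $nm=2$; this is precisely why the homogeneous Beresnevich--Velani argument must pass to the coprime-restricted sets $A'_\mathbf{q}$. The paper's key idea is the correct inhomogeneous analogue of that restriction: with $\mathbf{y}=\mathbf{a}/b$, one works with
\[
A''_\mathbf{q}=\{\mathbf{x}:\exists\,\mathbf{p}\in\mathbb{Z}^m,\ |\mathbf{qx}-\mathbf{p}-\mathbf{y}|<\psi(|\mathbf{q}|),\ \gcd(\mathbf{q},b\mathbf{p}+\mathbf{a})=1\}.
\]
The condition $\gcd(q,b\mathbf{p}+\mathbf{a})=1$ is exactly what forces the Gallagher shift vector $\mathbf{j}$ to be nonzero in the overlap calculation (since $(b\mathbf{p}+\mathbf{a})/q$ is then a reduced fraction with denominator strictly larger than that of $(b\mathbf{s}+\mathbf{a})/r$), after which one obtains $|A''_{1,m}(q,\delta_1)\cap A''_{1,m}(r,\delta_2)|\ll (b\delta_1\delta_2)^m$ and QIA follows. ``The shift multiplies lattice-point counts by a bounded factor'' is not the operative mechanism; without the coprimality device the estimate simply does not go through. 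Finally, the zero--one law you invoke is itself nontrivial: the paper proves it by showing that $\mathscr{F}^{\mathbf{y}}=\bigcup_k\mathscr{A}^{\mathbf{y}}(k\psi)$ is forward-invariant under the ergodic dilation $\mathbf{x}\mapsto(b+1)\mathbf{x}\bmod 1$, not under rational translations, which are not ergodic.
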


\vspace{.1cm}
\subsection{Strategy of proofs} Let $n,m \in \mathbb{N}$, $\psi : \mathbb{N} \to \mathbb{R}_{\geq 0}$ and $\mathbf{y} \in \mathbb{R}^m$ be given. We may assume that $\psi : \mathbb{N} \to [0,\frac{1}{2})$ and $\mathbf{y} \in [0,1)^m$. For each $\mathbf{q} \in \mathbb{Z}^n \setminus \{\mathbf{0}\}$, define
$$A_\mathbf{q} := \{\mathbf x \in [0,1)^{nm} : |\mathbf{qx} - \mathbf{p} - \mathbf{y}| < \psi(|\mathbf{q}|) \text{ for some } \mathbf{p} \in \mathbb{Z}^m\}.$$
By definition of $\mathscr{A}_{n,m}^\mathbf{y} (\psi)$, we have
$$\mathscr{A}_{n,m}^\mathbf{y} (\psi) = \limsup_{|\mathbf{q}| \to \infty} A_\mathbf{q} = \bigcap_{Q=1}^\infty \bigcup_{q\geq Q} \bigcup_{|\mathbf{q}| = q} A_\mathbf{q} .$$
Since the map $\mathbf{x} \mapsto \mathbf{qx}$ (mod 1) is measure-preserving (\cite{Spr79}, Chap 1, Lemma 8), and $A_\mathbf{q}$ is the preimage under this map of a ball (under the maximum norm) centered at $\mathbf{y}$ with radius $\psi(|\mathbf{q}|)$ in $[0,1)^{m} \cong \mathbb{R}^m/\mathbb{Z}^m$, it follows that
$$|A_\mathbf{q}| = (2\psi(|\mathbf{q}|))^m.$$
The number of vectors $\mathbf{q}$ with norm $|\mathbf{q}| = q$ is comparable to $q^{n-1}$. Therefore, by the Borel--Cantelli lemma, the convergence part of the theorem follows immediately.

The proof of the divergence part begins with the quasi-independence on average (QIA). A sequence $(B_\mathbf{q})_{\mathbf{q} \in \mathbb{Z}^n \setminus \{ \mathbf{0}\}}$ of measurable subsets of a finite measure space $(X,\mu)$ is \emph{quasi-indendent on average} if
$$\limsup_{Q \to \infty} \frac{\left(\sum_{|\mathbf{q}|=1}^Q \mu(B_\mathbf{q})\right)^2}{\sum_{|\mathbf{q}|,|\mathbf{r}|=1}^Q \mu(B_\mathbf{q} \cap B_\mathbf{r})}>0.$$
Instead of the second Borel--Cantelli lemma, which ensures full measure under independence assumptions, the divergence Borel--Cantelli lemma, which originates from the Chung--Erd\H{o}s lemma, provides the lower bound (see \cite[Chap 1, Lemma 5]{Spr79})
$$\mu(\limsup_{|\mathbf{q}| \to \infty}B_\mathbf{q}) \geq \limsup_{Q \to \infty} \frac{\left(\sum_{|\mathbf{q}|=1}^Q \mu(B_\mathbf{q})\right)^2}{\sum_{|\mathbf{q}|,|\mathbf{r}|=1}^Q \mu(B_\mathbf{q} \cap B_\mathbf{r})}.$$
Allen and Ram\'{i}rez \cite{AR22} proved the divergence part for $nm\geq 3$ by showing that $(A_\mathbf{q})_{\mathbf{q} \in \mathbb{Z}^n \setminus \{\mathbf{0}\}}$ is QIA and QIA gives full measure. However, we cannot guarantee that $(A_\mathbf{q})_{\mathbf{q} \in \mathbb{Z}^n \setminus \{\mathbf{0}\}}$ is QIA in $nm=2$ case. Thus for the homogeneous case, i.e. for $\mathbf{y} = \mathbf{0}$, Beresnevich and Velani \cite{BV09} constructed a sequence $(A'_\mathbf{q})_{\mathbf{q} \in \mathbb{Z}^n \setminus \{\mathbf{0}\}}$ of subsets of $A_\mathbf{q}$ by
$$A'_\mathbf{q} = \{\mathbf{x} \in [0,1)^{nm} : |\mathbf{qx} - \mathbf{p}|< \psi(|\mathbf{q}|) \text{ for some } \mathbf{p} \in \mathbb{Z}^m \text{ with } \gcd(\mathbf{q},\mathbf{p})=1\},$$
where $\gcd(\mathbf{q}, \mathbf{p}) := \gcd(q_1, \cdots, q_n, p_1, \cdots, p_m).$
They \cite{BV09} showed that $(A'_\mathbf{q})_{\mathbf{q} \in \mathbb{Z}^n \setminus \{\mathbf{0}\}}$ is QIA and applied the zero-one law from \cite{BV08} to conclude the result.

For the proof of Theorem \ref{thm1}, let $(n,m) = (2,1)$ and let $y \in \mathbb{R}$ be an inhomogeneous parameter. By Dirichlet's theorem, for each $d \in \mathbb{Z}\setminus\{0\},$ there exist $a_d \in \mathbb{Z}$ and $b_d \in \mathbb{N}$ such that
\begin{equation}\label{a,b}
    |b_d y - a_d| < \frac{1}{|d|}, \quad 1\leq b_d \leq |d|, \quad \gcd(a_d, b_d) = 1.
\end{equation}
For each $\mathbf{q} \in \mathbb{Z}^2 \setminus \{ \mathbf{0} \}$, denote $d= \gcd(\mathbf{q})$. This defines the sequence
\begin{equation}\label{a,b,q}
    (a_\mathbf{q}, b_\mathbf{q}) := (a_{\gcd(\mathbf{q})}, b_{\gcd(\mathbf{q})}) = (a_d, b_d).
\end{equation}
Now, we define $\tilde{A}_\mathbf{q}$ to be the set of $\mathbf{x} \in [0,1)^2$ such that
$$|\mathbf{qx} - p - y| < \psi(|\mathbf{q}|) \text{ for some } p \in \mathbb{Z} \text{ with } \gcd(\mathbf{q}, b_\mathbf{q}p+a_\mathbf{q}) = 1.$$
We prove that $(\tilde{A}_\mathbf{q})_{\mathbf{q} \in \mathbb{Z}^2 \setminus \{ \mathbf{0}\}}$ is QIA (Theorem \ref{thm3}) using techniques from \cite{Hau23}, and being QIA is enough to show that $\tilde{\mathscr{A}}^y_{2,1}(\psi) := \limsup_{|\mathbf{q}| \to \infty} \tilde{A}_\mathbf{q}$ has full measure (Theorem \ref{thm4}), thus completing the proof of Theorem \ref{thm1}. Notably, when $y$ is a non-Liouville irrational number, Hauke \cite{Hau23} established the QIA of $(A_\mathbf{q})_{\mathbf{q} \in \mathbb{Z}^2 \setminus \{\mathbf{0}\}}$ by proving that $|A_\mathbf{q} \cap A_\mathbf{r}|=0$ for many pairs $(\mathbf{q}, \mathbf{r})$. Adapting this approach to handle any $y \in \mathbb{R}$, we show in Lemma \ref{mainlemma} that $|\tilde{A}_\mathbf{q} \cap \tilde{A}_\mathbf{r}|=0$ for many pairs $(\mathbf{q}, \mathbf{r})$, which is the key step in establishing the QIA of $(\tilde{A}_\mathbf{q})_{\mathbf{q} \in \mathbb{Z}^2 \setminus \{ \mathbf{0}\}}$.

For the proof of Theorem \ref{thm2}, we generalize the strategy of \cite{BV08} and \cite{BV09} to cases where the inhomogeneous parameter $\mathbf{y}$ is a rational vector. Let $\mathbf{y} = \frac{\mathbf{a}}{b} = \frac{(a_1, \cdots, a_m)}{b}$ for $a_i \in \mathbb{Z}$ and $b \in \mathbb{N}$ such that $\gcd(\mathbf{a},b) = \gcd(a_1, \cdots, a_n, b) = 1$. We then define $A''_\mathbf{q}$ as the set of $\mathbf{x} \in [0,1)^{nm}$ satisfying
$$|\mathbf{qx} - \mathbf{p} - \mathbf{y}|< \psi(|\mathbf{q}|) \text{ for some } \mathbf{p} \in \mathbb{Z}^m \text{ with } \gcd(\mathbf{q},b\mathbf{p} + \mathbf{a})=1.$$
We will show that $\mathscr{A}_{n,m}^{\mathbf{y}}(\psi)$ satisfies the zero-one law (Theorem \ref{thm6}). Additionally, when $nm \geq 2$, the sequence $(A''_\mathbf{q})_{\mathbf{q} \in \mathbb{Z}^n \setminus \{\mathbf{0}\}}$ is QIA in the divergence case (Theorem \ref{thm5}), thus completing the proof of Theorem \ref{thm2}.

\subsection*{Remarks}
What is known and what remains open for Khintchine--Groshev-type problems is organized in Table 1 of \cite{HR24}. Theorem \ref{thm1} and Theorem \ref{thm2} address two of the cells in this table. The case of $(n,m)=(1,2)$ with an irrational inhomogeneous parameter remains unsolved. Additionally, the moving target cases \cite[Conjecture 2]{AR22}, where $\mathbf{y_\mathbf{q}} \in \mathbb{R}^m$ can vary as $\mathbf{q}$ changes, also remain open for $nm=2$.

Felipe Ramírez suggested that Theorem \ref{thm2} could be extended to a multivariate setting when $m\geq2$ with $\mathbf{y} \in \mathbb{Q}^m$. This generalization follows from \cite[Theorem 2.3]{Ram24}, where the $n=1$, $m \geq 2$ case of Theorem \ref{thm2} can substitute for the concluding step in the proof of \cite[Theorem 2.3]{Ram24}. This result addresses another cell in Table 1 of \cite{HR24}.

The condition $\gcd(\mathbf{q}, b\mathbf{p} + \mathbf{a}) = 1$ in $\tilde{A}_\mathbf{q}$ or $A''_\mathbf{q}$ appears in various references, such as \cite{Sch64}, and more recently in \cite{BHV24} and \cite{CT24}, as noted by Manuel Hauke in correspondence. Notably, the weak inhomogeneous Duffin--Schaeffer conjecture with a rational inhomogeneous parameter was proved in \cite{BHV24}, which parallels Theorem \ref{thm2}.

\subsection*{Acknowledgements}
The author would like to thank Seonhee Lim for introducing the author to this problem and for her guidance throughout this project as well as Felipe Ramírez, Manuel Hauke and the anonymous reviewer for their valuable comments. The author is supported by National Research Foundation of Korea, under Project number NRF005150821G0002223.

\vspace{.1cm}
\section{Preliminaries}
In this section, we gather definitions and lemmas, which are slight modifications of previous results. (See \cite{Spr79}, \cite{BV09}, \cite{AR22} and the references therein.) The following is a generalized version of the notions introduced in the previous section.
\begin{definition}\label{def}
    Let $n,m \in \mathbb{N}$, and let $(\frac{\mathbf{a}_\mathbf{q}}{b_\mathbf{q}})_{\mathbf{q} \in \mathbb{Z}^n \setminus \{\mathbf{0}\}}$ be a sequence of rational vectors in $\mathbb{Q}^m$ such that $\mathbf{a}_\mathbf{q} \in \mathbb{Z}^m,\ b_\mathbf{q} \in \mathbb{N}$ and $\gcd(\mathbf{a}_\mathbf{q}, b_\mathbf{q}) = 1.$ Also, let $(\mathbf{y}_\mathbf{q})_{\mathbf{q} \in \mathbb{Z}^n \setminus \{\mathbf{0}\}}$ be a sequence of vectors in $\mathbb{R}^m$. We call $\mathbf{y}_\mathbf{q}$ the \emph{moving targets}, and when $\mathbf{y}_\mathbf{q} = \mathbf{y}$ is fixed for every $\mathbf{q}$, then we call $\mathbf{y}$ the \emph{inhomogeneous parameter}. For each $\mathbf{q} \in \mathbb{Z}^n \setminus \{\mathbf{0}\}$ and $\delta \in \mathbb{R}_{\geq 0},$ we define the following sets:
\begin{align*}
    A_{n,m}(\mathbf{q},\delta) &= \{\mathbf{x} \in [0,1)^{nm} : \exists \mathbf{p} \in \mathbb{Z}^m \text{ s.t. } |\mathbf{qx} - \mathbf{p} - \mathbf{y}_\mathbf{q}| < \delta\}, \\
    \tilde{A}_{n,m}(\mathbf{q},\delta) &= \{\mathbf{x} \in [0,1)^{nm} : \exists \mathbf{p} \in \mathbb{Z}^m \text{ s.t. } |\mathbf{qx} - \mathbf{p} - \mathbf{y}_\mathbf{q}| <\delta, \ \gcd(\mathbf{q}, b_\mathbf{q}\mathbf{p} + \mathbf{a}_\mathbf{q}) = 1\}.
\end{align*}
    When a \emph{multivariate function} $\Psi : \mathbb{Z}^n \setminus \{\mathbf{0}\} \to \mathbb{R}_{\geq 0}$ is given, we denote $A_\mathbf{q} := A_{n,m}(\mathbf{q}, \Psi(\mathbf{q})),\ \tilde{A}_\mathbf{q} := \tilde{A}_{n,m}(\mathbf{q}, \Psi(\mathbf{q}))$. Similarly, when a \emph{univariate function} $\psi : \mathbb{N} \to \mathbb{R}_{\geq 0}$ is given, we denote $A_\mathbf{q} := A_{n,m}(\mathbf{q}, \psi(|\mathbf{q}|)),\ \tilde{A}_\mathbf{q} := \tilde{A}_{n,m}(\mathbf{q}, \psi(|\mathbf{q}|))$.
\end{definition}

Throughout this paper, $\ll$ and $\asymp$ denote the Vinogradov notations. The implicit constants in $\ll$ and $\asymp$ may depend on the dimension $(n,m)$ only. In proving Theorems \ref{thm1} and \ref{thm2}, we may assume that $\psi(q)<\frac{1}{2}$ for all $q\in \mathbb{N}$, since if we denote $\psi'(q) = \min\{\psi(q), c\}$ for some $0<c<\frac{1}{2},$ then $\sum_{q=1}^\infty q^{n-1}\psi(q)^m = \infty$ is equivalent to $\sum_{q=1}^\infty q^{n-1}\psi'(q)^m = \infty$ and $\mathscr{A}_{n,m}^\mathbf{y} (\psi') \subseteq \mathscr{A}_{n,m}^\mathbf{y} (\psi).$ Therefore, we assume $\delta< \frac{1}{2}$ and $\psi: \mathbb{N} \to [0,\frac{1}{2})$ in the following lemmas.

The following lemma gives the measure of the set $\tilde{A}_{n,m}(\mathbf{q}, \delta)$, and can be seen as a generalization of \cite[Lemma 7]{BV09}, which concerns only the case $\mathbf{a}_\mathbf{q} = \mathbf{0}$ and $b_\mathbf{q}=1$, to the setting of arbitrary $(\mathbf{a}_\mathbf{q}, b_\mathbf{q})$ with $\gcd(\mathbf{a}_\mathbf{q}, b_\mathbf{q}) = 1$. Alternatively, it can also be seen as a generalization of \cite[Proposition 1]{BHV24} to arbitrary dimensions.

\begin{lemma}\label{measure}
    Let $n,m \in \mathbb{N}$, $\delta \in [0,\frac{1}{2}),$ and let $\frac{\mathbf{a}_\mathbf{q}}{b_\mathbf{q}}, \mathbf{y}_\mathbf{q}$ be defined as in Definition \ref{def}. Then we have
    $$|\tilde{A}_{n,m}(\mathbf{q},\delta)| = (2\delta)^m \prod_{p|\mathbf{q}, p \nmid b_{\mathbf{q}}}(1-p^{-m}),$$
    where $p$ runs over primes, and $p|\mathbf{q}$ means that $p|\gcd(\mathbf{q})$.
\end{lemma}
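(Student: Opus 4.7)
The plan is to reduce to a counting problem on the torus $(\mathbb{R}/d\mathbb{Z})^m$, where $d := \gcd(\mathbf{q})$, via a unimodular substitution, and then apply the Chinese remainder theorem to count good residues. Writing $\mathbf{q} = d\mathbf{q}'$ with $\gcd(\mathbf{q}') = 1$ and extending $\mathbf{q}'$ to a basis of $\mathbb{Z}^n$, I obtain $M \in GL_n(\mathbb{Z})$ whose first row is $\mathbf{q}'$. The substitution $\mathbf{x} \mapsto M\mathbf{x}$ is a measure-preserving self-map of $(\mathbb{R}/\mathbb{Z})^{nm}$ and turns $\mathbf{qx}$ into $d\mathbf{x}_1'$, where $\mathbf{x}_1' \in [0,1)^m$ is the first row of the transformed matrix. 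Since $\gcd(\mathbf{q}, b_\mathbf{q}\mathbf{p}+\mathbf{a}_\mathbf{q}) = \gcd(d, b_\mathbf{q}\mathbf{p}+\mathbf{a}_\mathbf{q})$, the conditions defining $\tilde A_{n,m}(\mathbf{q},\delta)$ depend only on $\mathbf{x}_1'$, so Fubini reduces the computation to the measure of the set $E \subseteq [0,1)^m$ of $\mathbf{u}$ for which there exists $\mathbf{p} \in \mathbb{Z}^m$ with $|d\mathbf{u} - \mathbf{p} - \mathbf{y}_\mathbf{q}| < \delta$ and $\gcd(d, b_\mathbf{q}\mathbf{p} + \mathbf{a}_\mathbf{q}) = 1$.

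Rescaling by $\mathbf{v} = d\mathbf{u}$ and viewing the result modulo $d\mathbb{Z}^m$, the set $d \cdot E$ becomes a union, on the torus $(\mathbb{R}/d\mathbb{Z})^m$, of open boxes of side $2\delta$ centered at $\mathbf{r} + \mathbf{y}_\mathbf{q}$, one for each residue $\mathbf{r} \in (\mathbb{Z}/d\mathbb{Z})^m$ satisfying the gcd condition. Since $2\delta < 1$ while representatives of distinct residue classes mod $d$ differ by at least $1$ in some coordinate, these boxes are pairwise disjoint on the torus, each with measure $(2\delta)^m$. Hence $|E| = d^{-m} N (2\delta)^m$, where $N$ is the number of admissible residues $\mathbf{r}$.

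To compute $N$, I apply the Chinese remainder theorem and reduce to each prime $p \mid d$ separately. For $p \mid b_\mathbf{q}$ the hypothesis $\gcd(\mathbf{a}_\mathbf{q}, b_\mathbf{q}) = 1$ forces $p \nmid \mathbf{a}_\mathbf{q}$, so $b_\mathbf{q}\mathbf{r}+\mathbf{a}_\mathbf{q} \equiv \mathbf{a}_\mathbf{q} \not\equiv \mathbf{0} \pmod p$ for every $\mathbf{r}$, giving a local factor of $1$; for $p \nmid b_\mathbf{q}$ the inadmissible classes form the single vector $\mathbf{r} \equiv -b_\mathbf{q}^{-1}\mathbf{a}_\mathbf{q} \pmod p$ in $(\mathbb{Z}/p\mathbb{Z})^m$, giving local factor $1 - p^{-m}$. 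Multiplying these local factors produces $N = d^m \prod_{p \mid d,\, p \nmid b_\mathbf{q}}(1 - p^{-m})$, and combining with the previous paragraph yields the claimed formula.

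The key subtlety is verifying the disjointness of the boxes on $(\mathbb{R}/d\mathbb{Z})^m$, which relies jointly on the hypothesis $\delta < 1/2$ and the integrality of residue representatives; all other steps are either a measure-preserving change of variables or standard CRT counting.
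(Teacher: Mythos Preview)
Your proof is correct and takes a genuinely different route from the paper's. The paper argues by M\"obius inversion at the level of measures: it writes the indicator of the coprimality condition as $\sum_{l\mid\gcd(\mathbf{q},\,b_\mathbf{q}\mathbf{p}+\mathbf{a}_\mathbf{q})}\mu(l)$, swaps the order of summation over $\mathbf{p}$ and $l$, observes that only $l$ with $\gcd(l,b_\mathbf{q})=1$ contribute, and then uses that $\bigcup_{\mathbf{k}} A(\mathbf{q},l\mathbf{k}+\mathbf{j},\delta)$ has measure $(2\delta/l)^m$ to obtain the Euler product $\sum_{l\mid\mathbf{q},\,\gcd(l,b_\mathbf{q})=1}\mu(l)l^{-m}$. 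No change of variables in $\mathbf{x}$ is performed. You instead make a unimodular substitution to reduce $\mathbf{qx}$ to $d\mathbf{x}_1'$ (this is essentially the paper's later Lemma~\ref{onedimension}, brought forward), and then count admissible residue classes on $(\mathbb{Z}/d\mathbb{Z})^m$ directly by CRT, treating the primes $p\mid b_\mathbf{q}$ and $p\nmid b_\mathbf{q}$ separately. Your approach is a bit more geometric and avoids the M\"obius function entirely; the paper's approach avoids the coordinate change and stays closer to the template of \cite{BV09}. Both handle the key point---that primes dividing $b_\mathbf{q}$ impose no constraint because $\gcd(\mathbf{a}_\mathbf{q},b_\mathbf{q})=1$---in the same way.
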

\begin{proof}
    We follow the strategy of \cite[Lemmas 6 and 7]{BV09}. Let 
    $$A(\mathbf{q}, \mathbf{p}, \delta) := A^{\mathbf{y}_\mathbf{q}} (\mathbf{q}, \mathbf{p}, \delta) := \{\mathbf{x} \in [0,1)^{nm} : |\mathbf{qx} - \mathbf{p} - \mathbf{y}_\mathbf{q}| < \delta \}.$$
    Then for any integer $l | \mathbf{q}$ and a vector $\mathbf{j} \in \mathbb{Z}^m$, we have
    $$\bigcup_{\mathbf{k} \in \mathbb{Z}^m} A^{\mathbf{y}_\mathbf{q}} (\mathbf{q}, l\mathbf{k} + \mathbf{j}, \delta) = A^{(\mathbf{y}_\mathbf{q} + \mathbf{j})/l}(\mathbf{q}/l, \delta/l),$$
    where $A^{(\mathbf{y}_\mathbf{q} + \mathbf{j})/l}(\mathbf{q}/l, \delta/l)= \{ \mathbf{x} \in [0,1)^{nm} : \exists \mathbf{p} \in \mathbb{Z}^m \text{ s.t. } |\frac{\mathbf{q}}{l}\mathbf{x} - \mathbf{p} - \frac{\mathbf{y}_\mathbf{q} + \mathbf{j}}{l}| <\frac{\delta}{l}\}$. Since $\delta<\frac{1}{2}$, each $A^{\mathbf{y}_\mathbf{q}} (\mathbf{q}, l\mathbf{k} + \mathbf{j}, \delta)$ is disjoint from the others, and thus
    $$\sum_{\mathbf{k} \in \mathbb{Z}^m} |A^{\mathbf{y}_\mathbf{q}} (\mathbf{q}, l\mathbf{k} + \mathbf{j}, \delta)| = \left|A^{(\mathbf{y}_\mathbf{q} + \mathbf{j})/l}(\mathbf{q}/l, \delta/l)\right| = \left(\frac{2\delta}{l}\right)^m.$$
    Recall the well known properties of the M\"obius function $\mu$ :
    \begin{equation}\label{mobius}
        \sum_{l|d} \mu(l)= \begin{cases} 0 \ \text{ if } \ d>1 \\ 1 \ \text{ if } \ d=1 \end{cases} \quad \text{ and } \quad \sum_{l|d, \gcd(l,b)=1} \frac{\mu(l)}{l^m} = \prod_{p|d, p\nmid b} (1-p^{-m}).
    \end{equation}
    Using the notation $A\times 0 := \emptyset,\ A\times 1 := A$, we have
    $$\tilde{A}_{n,m}(\mathbf{q},\delta) = \bigcup_{\mathbf{p} \in \mathbb{Z}^m} A(\mathbf{q}, \mathbf{p}, \delta) \sum_{l|\gcd(\mathbf{q}, b_\mathbf{q}\mathbf{p} + \mathbf{a}_\mathbf{q})} \mu(l).$$
    If $\gcd(l,b_\mathbf{q}) \neq 1$, then $l\nmid b_\mathbf{q}\mathbf{p} + \mathbf{a}_\mathbf{q}$ for any $\mathbf{p} \in \mathbb{Z}^m$ since $\gcd(b_\mathbf{q}, \mathbf{a}_\mathbf{q}) = 1$. When $\gcd(l, b_\mathbf{q})= 1$, we can find an inverse $b_\mathbf{q}^{(l)} \in \mathbb{N}$ of $b_\mathbf{q}$ modulo $l$ satisfying $b_\mathbf{q}^{(l)} b_\mathbf{q} \equiv 1 \pmod{l},$ and so $b_\mathbf{q} \mathbf{p} + \mathbf{a}_\mathbf{q} \equiv \mathbf{0} \pmod{l} \Leftrightarrow \mathbf{p} \equiv -b^{(l)}_\mathbf{q} \mathbf{a}_\mathbf{q} \pmod{l}$, thus we have
    $$\bigcup_{\substack{\mathbf{p} \in \mathbb{Z}^m \\ l|b_\mathbf{q} \mathbf{p} + \mathbf{a}_\mathbf{q}}} A(\mathbf{q}, \mathbf{p}, \delta) = \bigcup_{\mathbf{k} \in \mathbb{Z}^m} A(\mathbf{q}, l\mathbf{k} - b^{(l)}_\mathbf{q} \mathbf{a}_\mathbf{q}, \delta).$$
    Therefore we have
    \begin{align*}
        |\tilde{A}_{n,m}(\mathbf{q},\delta)| &= \sum_{\mathbf{p} \in \mathbb{Z}^m} |A(\mathbf{q}, \mathbf{p}, \delta)| \sum_{l|\gcd(\mathbf{q}, b_\mathbf{q}\mathbf{p} + \mathbf{a}_\mathbf{q})} \mu(l) \\
        &= \sum_{l|\mathbf{q}} \sum_{\mathbf{p} \in \mathbb{Z}^m} |A(\mathbf{q}, \mathbf{p}, \delta)| \sum_{l | b_\mathbf{q}\mathbf{p} + \mathbf{a}_\mathbf{q}} \mu(l) \\
        &= \sum_{l|\mathbf{q}, \gcd(l, b_\mathbf{q}) = 1}\mu(l) \sum_{\mathbf{k} \in \mathbb{Z}^m} |A(\mathbf{q}, l\mathbf{k} - b^{(l)}_\mathbf{q} \mathbf{a}_\mathbf{q}, \delta)| \\
        &= \sum_{l|\mathbf{q}, \gcd(l, b_\mathbf{q}) = 1}\mu(l) \left(\frac{2\delta}{l}\right)^m = (2\delta)^m \prod_{p|\mathbf{q}, p\nmid b_\mathbf{q}}(1-p^{-m}).
    \end{align*}
\end{proof}

\begin{lemma}\label{comparable}
    Let $nm \geq 2$, $\psi : \mathbb{N} \to [0,\frac{1}{2}),$ and let $\frac{\mathbf{a}_\mathbf{q}}{b_\mathbf{q}}, \mathbf{y}_\mathbf{q}$ be as in Definition \ref{def}. Then
    $$\sum_{|\mathbf{q}| =1}^Q \left|\tilde{A}_{n,m}(\mathbf{q}, \psi(|\mathbf{q}|))\right| \asymp \sum_{q=1}^Q q^{n-1}\psi(q)^m.$$
\end{lemma}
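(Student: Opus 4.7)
The plan is to invoke Lemma \ref{measure}, which reduces the desired asymptotic to
$$\sum_{|\mathbf{q}|=1}^{Q} \psi(|\mathbf{q}|)^{m} \prod_{\substack{p\mid \mathbf{q}\\ p\nmid b_{\mathbf{q}}}}(1-p^{-m}) \asymp \sum_{q=1}^{Q} q^{n-1}\psi(q)^{m}.$$
The upper bound is routine: estimate the Euler-type product trivially by $1$, then group the sum on the left by $|\mathbf{q}|=q$ and apply the standard count $\#\{\mathbf{q}\in\mathbb{Z}^{n}\setminus\{\mathbf{0}\}: |\mathbf{q}|=q\}\asymp q^{n-1}$.

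For the lower bound I would split into two cases according to $m$. When $m\geq 2$, the convergent Euler product $\prod_{p}(1-p^{-m})=\zeta(m)^{-1}>0$ gives a uniform positive lower bound on the product appearing in each summand, and the same lattice-point count concludes. When $m=1$, the hypothesis $nm\geq 2$ forces $n\geq 2$, and now the product $\prod_{p\mid\mathbf{q}}(1-p^{-1})$ may be arbitrarily small (of order $1/\log\log|\mathbf{q}|$). To circumvent this I would restrict attention to \emph{primitive} $\mathbf{q}$, i.e.\ those with $\gcd(\mathbf{q})=1$: for such $\mathbf{q}$ the Euler-type product is empty and equals $1$, so the lower bound reduces to counting primitive vectors of prescribed norm.

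The main (essentially only) step requiring care is then the classical estimate $\#\{\mathbf{q}\in\mathbb{Z}^{n}:|\mathbf{q}|=q,\ \gcd(\mathbf{q})=1\}\asymp q^{n-1}$ for $n\geq 2$, which follows from Möbius inversion applied to the trivial count $\#\{\mathbf{q}:|\mathbf{q}|=q\}=(2q+1)^{n}-(2q-1)^{n}\asymp q^{n-1}$; the leading term scales as $q^{n-1}/\zeta(n)$ with a lower-order error absorbed because $n\geq 2$. Inserting this back yields the desired lower bound $\gg \sum_{q=1}^{Q} q^{n-1}\psi(q)$ in the $m=1$ case and completes the proof.
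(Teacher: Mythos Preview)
Your upper bound and the $m\geq 2$ lower bound are fine, but the $m=1$ lower bound breaks down precisely at $(n,m)=(2,1)$, the case that matters most in this paper. The claimed ``classical estimate'' $\#\{\mathbf{q}\in\mathbb{Z}^n:|\mathbf{q}|=q,\ \gcd(\mathbf{q})=1\}\asymp q^{n-1}$ is \emph{false} for $n=2$. The M\"obius inversion you invoke is $N^*(q)=\sum_{d\mid q}\mu(d)\,N(q/d)$, a sum over \emph{divisors of $q$}, so the leading coefficient is $\prod_{p\mid q}(1-p^{-(n-1)})$, not $1/\zeta(n)$; the factor $1/\zeta(n)$ governs the ball count $|\mathbf{q}|\leq Q$, not a single shell. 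For $n=2$ this gives exactly $N^*(q)=8\varphi(q)$, and $\varphi(q)/q$ is not bounded below. Your primitive-vector restriction therefore yields only $\sum_{q\leq Q}\varphi(q)\psi(q)$, which is \emph{not} $\gg\sum_{q\leq Q}q\psi(q)$ in general: support $\psi$ on the primorials $q_k=\prod_{p\leq k}p$ with $q_k\psi(q_k)=1/\log\log q_k$; then $\sum q\psi(q)\asymp\sum_k(\log k)^{-1}$ diverges while, by Mertens, $\sum\varphi(q)\psi(q)\asymp\sum_k(\log k)^{-2}$ converges.

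The paper avoids this by sandwiching $|A'_\mathbf{q}|\leq|\tilde A_\mathbf{q}|\leq|A_\mathbf{q}|$ and citing \cite[Lemma~10]{BV09}. The underlying mechanism is to keep \emph{all} $\mathbf{q}$ and compute, for $m=1$,
\[
\sum_{|\mathbf{q}|=q}\prod_{p\mid\gcd(\mathbf{q})}\bigl(1-p^{-1}\bigr)
=\sum_{e\mid q}\frac{\mu(e)}{e}\,N(q/e)
\asymp q^{n-1}\prod_{p\mid q}\bigl(1-p^{-n}\bigr)\ \geq\ \frac{q^{n-1}}{\zeta(n)}
\]
for $n\geq 2$. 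The extra factor $e^{-(n-1)}$ contributed by $N(q/e)$ is exactly what upgrades the bad exponent $n-1$ to the good exponent $n$; discarding the non-primitive vectors forfeits this averaging and with it the uniform lower bound. Your argument is repaired by summing over the full shell in this way rather than restricting to $\gcd(\mathbf{q})=1$.
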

\begin{proof}
    Recall that $A_\mathbf{q} = A_{n,m}(\mathbf{q}, \psi(|\mathbf{q}|)),\ \tilde{A}_\mathbf{q} = \tilde{A}_{n,m}(\mathbf{q}, \psi(|\mathbf{q}|))$ and define
    $$A'_\mathbf{q} := \{\mathbf{x} \in [0,1)^{nm} : \exists \mathbf{p} \in \mathbb{Z}^m \text{ s.t. } |\mathbf{qx} - \mathbf{p}| < \psi(|\mathbf{q}|), \ \gcd(\mathbf{q},\mathbf{p}) = 1\}.$$ The result follows from $\sum_{|\mathbf{q}| =1}^Q |A'_\mathbf{q}| \asymp \sum_{q=1}^Q q^{n-1}\psi(q)^m$ \cite[Lemma 10]{BV09}, and the inequality $|A'_\mathbf{q}| \leq |\tilde{A}_\mathbf{q}| \leq |A_\mathbf{q}| = (2\psi(|\mathbf{q}|))^m$ from
    $$|A'_\mathbf{q}| = (2\psi(|\mathbf{q}|))^m \prod_{p|\mathbf{q}} (1-p^{-m}), \qquad |\tilde{A}_\mathbf{q}| = (2\psi(|\mathbf{q}|))^m\prod_{p|\mathbf{q}, p \nmid b_{\mathbf{q}}}(1-p^{-m}).$$
\end{proof}

\begin{lemma}[\cite{Spr79}, Chap 1, Lemma 9] \label{notparallel}
    Let $n \geq 2,\ m \geq 1,\ \delta_1, \delta_2 \in [0,\frac{1}{2}),$ and let $\frac{\mathbf{a}_\mathbf{q}}{b_\mathbf{q}}, \mathbf{y}_\mathbf{q}$ be as in Definition \ref{def}. If $\mathbf{q} \nparallel \mathbf{r}$, then $|A_{n,m}(\mathbf{q}, \delta_1) \cap A_{n,m}(\mathbf{r}, \delta_2)| = |A_{n,m}(\mathbf{q},\delta_1)| |A_{n,m}(\mathbf{r},\delta_2)|$. In particular, if $\mathbf{q} \nparallel \mathbf{r}$, then $|\tilde{A}_{n,m}(\mathbf{q}, \delta_1) \cap \tilde{A}_{n,m}(\mathbf{r}, \delta_2)| \leq |A_{n,m}(\mathbf{q},\delta_1)| |A_{n,m}(\mathbf{r}, \delta_2)|.$
\end{lemma}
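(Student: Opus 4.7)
The plan is to reduce the asserted independence to the case $m=1$ by factorising along the columns of $\mathbf{x}$, and then to derive the $m=1$ case from measure-preservation of an affine map $[0,1)^n \to [0,1)^2$ built from $\mathbf{q}$ and $\mathbf{r}$.

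For the reduction, I would write $\mathbf{x} \in [0,1)^{nm}$ as columns $\mathbf{x}^{(1)},\dots,\mathbf{x}^{(m)}\in[0,1)^n$ and decompose $\mathbf{y}_\mathbf{q}=(y_{\mathbf{q},1},\dots,y_{\mathbf{q},m})$ and similarly $\mathbf{y}_\mathbf{r}$. The $j$-th coordinate of $\mathbf{q}\mathbf{x}-\mathbf{p}-\mathbf{y}_\mathbf{q}$ is $\mathbf{q}\cdot\mathbf{x}^{(j)}-p_j-y_{\mathbf{q},j}$ and depends only on $\mathbf{x}^{(j)}$, so the maximum-norm condition defining $A_{n,m}(\mathbf{q},\delta_1)$ splits across columns. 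Introducing
\[
B(\mathbf{q},\delta,y) := \bigl\{\mathbf{v}\in[0,1)^n : \exists\, p\in\mathbb{Z},\ |\mathbf{q}\cdot\mathbf{v}-p-y|<\delta \bigr\},
\]
one obtains the product decomposition $A_{n,m}(\mathbf{q},\delta_1)=\prod_{j=1}^m B(\mathbf{q},\delta_1,y_{\mathbf{q},j})$, and likewise for $\mathbf{r}$. Intersecting and taking Lebesgue measure then reduces the identity to the columnwise claim
\[
|B(\mathbf{q},\delta_1,y_{\mathbf{q},j})\cap B(\mathbf{r},\delta_2,y_{\mathbf{r},j})|=(2\delta_1)(2\delta_2)\qquad(j=1,\dots,m).
\]

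For each $j$, consider the map $\Phi:[0,1)^n\to[0,1)^2$ defined by $\Phi(\mathbf{v})=(\mathbf{q}\cdot\mathbf{v},\,\mathbf{r}\cdot\mathbf{v})\pmod 1$. The intersection above equals $\Phi^{-1}(I_1\times I_2)$, where $I_k\subset\mathbb{R}/\mathbb{Z}$ is the open interval of length $2\delta_k$ centred at $y_{\mathbf{q},j}$ (for $k=1$) or $y_{\mathbf{r},j}$ (for $k=2$). Since $\mathbf{q}\nparallel\mathbf{r}$ and $n\geq 2$, the $2\times n$ integer matrix $L=\begin{pmatrix}\mathbf{q}\\ \mathbf{r}\end{pmatrix}$ has rank $2$, and so admits a nonsingular $2\times 2$ minor $M=\begin{pmatrix}q_{i_1} & q_{i_2}\\ r_{i_1} & r_{i_2}\end{pmatrix}$. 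Freezing the remaining coordinates and applying Fubini, $\Phi$ restricts on each two-dimensional slice to an affine self-map of $(\mathbb{R}/\mathbb{Z})^2$ with linear part $M$, which is an $|\det M|$-fold covering and therefore preserves Lebesgue measure. Consequently $\Phi_*\lambda_{[0,1)^n}=\lambda_{[0,1)^2}$, whence $|\Phi^{-1}(I_1\times I_2)|=(2\delta_1)(2\delta_2)$, as needed.

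Multiplying the columnwise equality across $j=1,\dots,m$ yields $|A_{n,m}(\mathbf{q},\delta_1)\cap A_{n,m}(\mathbf{r},\delta_2)|=|A_{n,m}(\mathbf{q},\delta_1)|\,|A_{n,m}(\mathbf{r},\delta_2)|$. The ``in particular'' clause follows immediately from the inclusion $\tilde{A}_{n,m}(\mathbf{q},\delta)\subseteq A_{n,m}(\mathbf{q},\delta)$. I do not anticipate a genuine obstacle: the argument is the standard proof behind Sprindzuk's lemma, and the only nontrivial ingredient---measure-preservation of a nonsingular integer linear self-map of the $2$-torus---is classical.
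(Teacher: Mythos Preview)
The paper does not supply its own proof of this lemma; it is stated with a citation to Sprind\v{z}uk \cite[Chap.~1, Lemma~9]{Spr79} and used without further argument. Your proof is correct and is essentially the standard argument behind the cited result: the column-wise reduction works because the maximum norm and the existential quantifier over $\mathbf{p}\in\mathbb{Z}^m$ both factor across the $m$ columns of $\mathbf{x}$, and the measure-preservation of $\Phi(\mathbf{v})=(\mathbf{q}\cdot\mathbf{v},\mathbf{r}\cdot\mathbf{v})\bmod 1$ is precisely what the linear-independence hypothesis $\mathbf{q}\nparallel\mathbf{r}$ buys. Your Fubini argument via a nonsingular $2\times 2$ minor is a clean way to see this; an equivalent route is to note that the induced group homomorphism $(\mathbb{R}/\mathbb{Z})^n\to(\mathbb{R}/\mathbb{Z})^2$ is surjective (hence pushes Haar measure to Haar measure), which is exactly the content of \cite[Chap.~1, Lemma~8]{Spr79} invoked elsewhere in the paper. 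The ``in particular'' clause follows immediately from the inclusion $\tilde{A}_{n,m}\subseteq A_{n,m}$, as you say.
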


\begin{lemma}\label{onedimension}
    Let $n,m \in \mathbb{N}$, and let $\mathbf{y} \in \mathbb{R}^m$ be an inhomogeneous parameter. Suppose that $(\mathbf{a}_\mathbf{q}, b_\mathbf{q})$ depends only on $d= \gcd(\mathbf{q})$, that is, $(\mathbf{a}_\mathbf{q}, b_\mathbf{q}) = (\mathbf{a}_d, b_d)$. Then we have
    $$|\tilde{A}_{n,m}(\mathbf{q}, \delta)| = |\tilde{A}_{1,m}(d, \delta)|.$$
\end{lemma}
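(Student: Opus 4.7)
The plan is to derive Lemma \ref{onedimension} as a direct corollary of Lemma \ref{measure}. First I would apply the measure formula from that lemma to the left-hand side, obtaining
$$|\tilde{A}_{n,m}(\mathbf{q}, \delta)| = (2\delta)^m \prod_{p \mid \mathbf{q},\, p \nmid b_\mathbf{q}} (1 - p^{-m}).$$
Then I would apply the same lemma in the one-dimensional setting $n = 1$ (with the integer $d$ playing the role of the ``vector''), giving
$$|\tilde{A}_{1,m}(d, \delta)| = (2\delta)^m \prod_{p \mid d,\, p \nmid b_d} (1 - p^{-m}).$$

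Next I would observe that these two products coincide. The convention recalled in Lemma \ref{measure} is that $p \mid \mathbf{q}$ means $p \mid \gcd(\mathbf{q})$; since by definition $d = \gcd(\mathbf{q})$, the condition $p \mid \mathbf{q}$ is identical to $p \mid d$. The hypothesis $(\mathbf{a}_\mathbf{q}, b_\mathbf{q}) = (\mathbf{a}_d, b_d)$ then ensures $b_\mathbf{q} = b_d$, so the exclusion $p \nmid b_\mathbf{q}$ matches $p \nmid b_d$. Hence the two products range over the same set of primes and the equality of measures is immediate. One should also briefly note that the hypothesis that $\mathbf{y}_\mathbf{q}$ is a fixed inhomogeneous parameter $\mathbf{y}$ is only needed so that the right-hand side $|\tilde{A}_{1,m}(d,\delta)|$ is well defined (the one-dimensional set uses the same target $\mathbf{y}$); the measure formula of Lemma \ref{measure} itself is independent of the choice of $\mathbf{y}_\mathbf{q}$.

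Since the argument is a direct substitution-and-comparison using the explicit formula already established, there is no genuine obstacle; the content of the lemma lies entirely in recognizing that the measure of $\tilde{A}_{n,m}(\mathbf{q},\delta)$ depends on $\mathbf{q}$ only through $\gcd(\mathbf{q})$ and the associated pair $(\mathbf{a}_d, b_d)$. This observation will be useful later in reducing averaged sums over $\mathbf{q} \in \mathbb{Z}^n \setminus \{\mathbf{0}\}$ to sums over $d \in \mathbb{N}$ weighted by the number of $\mathbf{q}$ with $\gcd(\mathbf{q}) = d$.
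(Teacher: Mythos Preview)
Your argument is correct: both sides are computed by the explicit formula of Lemma~\ref{measure}, and the two products coincide because $p\mid\mathbf{q}$ means $p\mid\gcd(\mathbf{q})=d$ and $b_\mathbf{q}=b_d$ by hypothesis.

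However, the paper proceeds differently. It writes $\mathbf{q}=d\mathbf{k}$ with $\mathbf{k}$ primitive, introduces the measure-preserving map $T_\mathbf{k}:[0,1)^{nm}\to[0,1)^m$, $\mathbf{x}\mapsto\mathbf{kx}\bmod 1$, and checks directly that $\tilde{A}_{n,m}(\mathbf{q},\delta)=T_\mathbf{k}^{-1}(\tilde{A}_{1,m}(d,\delta))$; the equality of measures then follows from measure preservation. Your route is shorter here because the measure formula is already in hand, but the paper's set-level argument pays off in the very next lemma (Lemma~\ref{intersection}), where one needs the analogous identity for \emph{intersections} $|\tilde{A}_{n,m}(\mathbf{q},\delta_1)\cap\tilde{A}_{n,m}(\mathbf{r},\delta_2)|=|\tilde{A}_{1,m}(d,\delta_1)\cap\tilde{A}_{1,m}(e,\delta_2)|$ for parallel $\mathbf{q},\mathbf{r}$. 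That result follows immediately from the same preimage-under-$T_\mathbf{k}$ observation, whereas your formula-comparison approach gives no handle on intersections. So while your proof of Lemma~\ref{onedimension} stands on its own, adopting the transformation viewpoint now would let you prove Lemmas~\ref{onedimension} and~\ref{intersection} in one stroke.
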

\begin{proof}
    There exists a primitive vector $\mathbf{k} \in \mathbb{Z}^n \setminus \{\mathbf{0}\}$, that is, a vector satisfying $\gcd(\mathbf{k}) = 1$, such that $\mathbf{q} = d\mathbf{k}.$ Consider the transformation
    $$T_\mathbf{k} : [0,1)^{nm} \to [0,1)^m ;\ \mathbf{x} \mapsto \mathbf{kx} \text{ mod } 1.$$
    We have the equivalences
    \begin{align*}
        \mathbf{x} \in \tilde{A}_{n,m}(\mathbf{q}, \delta) 
        &\Leftrightarrow \exists \mathbf{p} \in \mathbb{Z}^m \text{ s.t. }|d\mathbf{kx} - \mathbf{p} - \mathbf{y}| < \delta  \\
        &\qquad \qquad \text{ with } \gcd(d\mathbf{k}, b_\mathbf{q}\mathbf{p} + \mathbf{a}_\mathbf{q}) = \gcd(d, b_d \mathbf{p} + \mathbf{a}_d) = 1 \\
        &\Leftrightarrow T_\mathbf{k}(\mathbf{x}) \in \tilde{A}_{1,m}(d, \delta),
    \end{align*}
    hence $\tilde{A}_{n,m}(\mathbf{q}, \delta) = T_\mathbf{k}^{-1}(\tilde A_{1,m}(d, \delta)).$ The lemma follows from the fact that $T_\mathbf{k}$ is measure-preserving \cite[Chap 1, Lemma 8]{Spr79}.
\end{proof}

\begin{lemma}\label{intersection}
    Under the same assumption as in Lemma \ref{onedimension}, let $\mathbf{k} \in \mathbb{Z}^n \setminus \{\mathbf{0}\}$ be a primitive vector and $\delta_1, \delta_2 \in [0,\frac{1}{2}).$ Set $\mathbf{q} = d\mathbf{k}$ and $\mathbf{r}=e \mathbf{k}$ for $d, e \in \mathbb{Z}\setminus \{0\}.$ Then
    $$|\tilde A_{n,m}(\mathbf q,\delta_1) \cap \tilde A_{n,m}(\mathbf r, \delta_2)| = |\tilde A_{1,m}(d, \delta_1) \cap \tilde A_{1,m}(e, \delta_2)|.$$
\end{lemma}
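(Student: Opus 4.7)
The plan is to reduce Lemma \ref{intersection} to Lemma \ref{onedimension} by re-using the same measure-preserving transformation $T_\mathbf{k} : [0,1)^{nm} \to [0,1)^m$, $\mathbf{x} \mapsto \mathbf{k}\mathbf{x} \bmod 1$ constructed there. The crucial observation is that the vectors $\mathbf{q} = d\mathbf{k}$ and $\mathbf{r} = e\mathbf{k}$ share the same primitive direction $\mathbf{k}$, so a single transformation $T_\mathbf{k}$ simultaneously handles both sets.

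First, I would apply the argument from the proof of Lemma \ref{onedimension} twice to obtain the identities $\tilde{A}_{n,m}(\mathbf{q},\delta_1) = T_\mathbf{k}^{-1}(\tilde{A}_{1,m}(d,\delta_1))$ and $\tilde{A}_{n,m}(\mathbf{r},\delta_2) = T_\mathbf{k}^{-1}(\tilde{A}_{1,m}(e,\delta_2))$. This step uses exactly the hypothesis that $(\mathbf{a}_\mathbf{q}, b_\mathbf{q})$ depends only on $\gcd(\mathbf{q})$, which ensures that $(\mathbf{a}_\mathbf{q}, b_\mathbf{q}) = (\mathbf{a}_d, b_d)$ and $(\mathbf{a}_\mathbf{r}, b_\mathbf{r}) = (\mathbf{a}_e, b_e)$, and that the coprimality condition $\gcd(\mathbf{q}, b_\mathbf{q}\mathbf{p}+\mathbf{a}_\mathbf{q}) = \gcd(d\mathbf{k}, b_d \mathbf{p} + \mathbf{a}_d) = \gcd(d, b_d\mathbf{p}+\mathbf{a}_d)$ reduces to the one-dimensional coprimality condition, and similarly for $\mathbf{r}$.

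Second, since preimages commute with intersections, I get
\[
\tilde{A}_{n,m}(\mathbf{q},\delta_1) \cap \tilde{A}_{n,m}(\mathbf{r},\delta_2) = T_\mathbf{k}^{-1}\bigl(\tilde{A}_{1,m}(d,\delta_1) \cap \tilde{A}_{1,m}(e,\delta_2)\bigr).
\]
Taking Lebesgue measure on both sides and invoking the fact that $T_\mathbf{k}$ is measure-preserving (\cite[Chap 1, Lemma 8]{Spr79}, as used in Lemma \ref{onedimension}) yields the claimed equality.

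There is no real obstacle here: the lemma is a direct two-set extension of Lemma \ref{onedimension}, and the proof is essentially a one-line application of the measure-preserving property to the preimage of an intersection. The only mild subtlety is checking that the same $T_\mathbf{k}$ works for both $\mathbf{q}$ and $\mathbf{r}$, which is immediate from $\mathbf{q} = d\mathbf{k}$ and $\mathbf{r} = e\mathbf{k}$ sharing the primitive vector $\mathbf{k}$.
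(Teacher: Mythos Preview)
Your proposal is correct and takes essentially the same approach as the paper: both establish that $\tilde A_{n,m}(\mathbf q,\delta_1) \cap \tilde A_{n,m}(\mathbf r,\delta_2) = T_\mathbf{k}^{-1}\bigl(\tilde A_{1,m}(d,\delta_1) \cap \tilde A_{1,m}(e,\delta_2)\bigr)$ and conclude via the measure-preserving property of $T_\mathbf{k}$. The only cosmetic difference is that the paper writes out the joint equivalence chain for $\mathbf{x}$ directly, whereas you invoke the set identity from the proof of Lemma~\ref{onedimension} for each of $\mathbf{q}$ and $\mathbf{r}$ and then intersect.
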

\begin{proof}
    Since $\gcd(\mathbf{q}) = |d|$ and $\gcd(\mathbf{r}) = |e|$, we have the equivalences
    \begin{align*}
        \mathbf{x} \in \tilde{A}_{n,m}&(\mathbf{q}, \delta_1) \cap \tilde{A}_{n,m}(\mathbf{r}, \delta_2) \\
        &\Leftrightarrow \exists \mathbf{p}, \mathbf{s}\in \mathbb{Z}^m \text{ s.t. } \begin{cases} |d\mathbf{kx} - \mathbf{p} - \mathbf{y}| < \delta_1 \text{ with } \gcd(d\mathbf{k}, b_\mathbf{q}\mathbf{p} + \mathbf{a}_\mathbf{q}) = 1, \\ |e\mathbf{kx} - \mathbf{s} - \mathbf{y}| < \delta_2 \text{ with } \gcd(e\mathbf{k}, b_\mathbf{r}\mathbf{s} + \mathbf{a}_\mathbf{r}) = 1 \end{cases} \\
        &\Leftrightarrow \exists \mathbf{p}, \mathbf{s}\in \mathbb{Z}^m \text{ s.t. } \begin{cases} |d\mathbf{kx} - \mathbf{p} - \mathbf{y}| < \delta_1 \text{ with } \gcd(d, b_{|d|} \mathbf{p} + \mathbf{a}_{|d|}) = 1, \\ |e\mathbf{kx} - \mathbf{s} - \mathbf{y}| < \delta_2 \text{ with } \gcd(e, b_{|e|} \mathbf{s} + \mathbf{a}_{|e|}) = 1 \end{cases} \\
        &\Leftrightarrow T_\mathbf{k}(\mathbf{x}) \in \tilde A_{1,m}(d, \delta_1) \cap \tilde A_{1,m}(e, \delta_2),
    \end{align*}
    hence $\tilde A_{n,m} (\mathbf q, \delta_1) \cap \tilde A_{n,m} (\mathbf r, \delta_2) = T_\mathbf{k}^{-1}(\tilde A_{1,m} (d,\delta_1) \cap \tilde A_{1,m}(e, \delta_2)).$ The lemma follows from the fact that $T_\mathbf{k}$ is measure-preserving.
\end{proof}

\begin{lemma}\label{basicineq}
    Let $(n,m)= (1,1),\ \delta_1, \delta_2 \in [0,\frac{1}{2}),\ d,e \in \mathbb{Z} \setminus \{0\}$, and let $\frac{a_q}{b_q}, y_q$ be as in Definition \ref{def}. Then we have
    $$|\tilde A_{1,1}(d, \delta_1) \cap \tilde A_{1,1}(e, \delta_2)| \ll \delta_1 \delta_2 + \frac{\delta_1 \gcd(d,e)}{|d|}.$$
\end{lemma}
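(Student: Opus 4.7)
The plan is to reduce to the unrestricted sets $A_{1,1}$ and then run an essentially one-dimensional interval-counting argument. First I would note that $\tilde A_{1,1}(d,\delta_1)\subseteq A_{1,1}(d,\delta_1)$ and likewise for $e$, so it suffices to bound $|A_{1,1}(d,\delta_1)\cap A_{1,1}(e,\delta_2)|$; the coprimality conditions in $\tilde A$ play no role here. After replacing $(d,e)$ by $(|d|,|e|)$ if necessary, assume $d,e>0$. Then $A_{1,1}(d,\delta_1)\cap[0,1)$ decomposes as a disjoint union of open intervals $I_p=\bigl(\tfrac{p+y_d-\delta_1}{d},\tfrac{p+y_d+\delta_1}{d}\bigr)$ of length $2\delta_1/d$ (with mild truncation at the boundary of $[0,1)$), and analogously $A_{1,1}(e,\delta_2)\cap[0,1)$ splits into intervals $J_s$ of length $2\delta_2/e$. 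Thus the quantity to estimate is $\sum_{p,s}|I_p\cap J_s|$.

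The heart of the argument will be counting the contributing pairs $(p,s)$. Whenever $I_p\cap J_s\neq\emptyset$ the centers satisfy $|(p+y_d)/d-(s+y_e)/e|<\delta_1/d+\delta_2/e$, which upon clearing denominators reads
$$|ep-ds-c|<e\delta_1+d\delta_2,\qquad c:=dy_e-ey_d.$$
Every integer of the form $ep-ds$ is divisible by $g:=\gcd(d,e)$, so the admissible values form at most $2(e\delta_1+d\delta_2)/g+1$ multiples of $g$ in the window above. For each such value $k$, the equation $ep-ds=k$ defines a single residue class of $p$ modulo $d/g$, yielding exactly $g$ solutions with $p\in\{0,\dots,d-1\}$, each determining $s$ uniquely. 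Accounting for an additive $O(1)$ from boundary effects, the number of contributing pairs is $\ll (e\delta_1+d\delta_2)+g$.

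Each overlap satisfies $|I_p\cap J_s|\leq 2\min(\delta_1/d,\delta_2/e)$, so the total measure is bounded by
$$\min\!\bigl(\tfrac{\delta_1}{d},\tfrac{\delta_2}{e}\bigr)\bigl((e\delta_1+d\delta_2)+g\bigr)\ll \delta_1\delta_2+\tfrac{g\delta_1}{d},$$
using the elementary inequality $\min(\delta_1/d,\delta_2/e)(e\delta_1+d\delta_2)\leq 2\delta_1\delta_2$ together with $\min(\delta_1/d,\delta_2/e)\leq\delta_1/d$. I expect the main obstacle to be the lattice-point bookkeeping: verifying that the congruence $g\mid(ep-ds)$ collapses the count by exactly a factor of $g$, and that boundary effects from intervals truncated at $[0,1)$ contribute only an additive $O(1)$ absorbed into $g$. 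Once these details are in place the bound is immediate.
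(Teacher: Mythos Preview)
Your proposal is correct and follows the same reduction as the paper: pass from $\tilde A_{1,1}$ to $A_{1,1}$ by containment, then bound $|A_{1,1}(d,\delta_1)\cap A_{1,1}(e,\delta_2)|$. The only difference is that the paper cites this last bound as \cite[Lemma~8]{AR22}, whereas you reprove it from scratch via the interval-counting and lattice-point argument; your computation is the standard one and goes through as written.
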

\begin{proof}
    It follows from $|A_{1,1}(d, \delta_1) \cap A_{1,1}(e, \delta_2)| \ll \delta_1 \delta_2 + \frac{\delta_1 \gcd(d,e)}{|d|}$ \cite[Lemma 8]{AR22} and inequality $|\tilde A_{1,1}(d, \delta_1) \cap \tilde A_{1,1}(e, \delta_2)| \leq |A_{1,1}(d, \delta_1) \cap A_{1,1}(e, \delta_2)|.$
\end{proof}

\vspace{.1cm}
\section{Proof of Theorem \ref{thm1}}
\subsection{Quasi-independence on average}
Let $(n,m) = (2,1)$, $y\in \mathbb{R}$ be an inhomogeneous parameter, and $\psi:\mathbb{N} \to \mathbb{R}_{\geq 0}$ satisfy $\sum_{q=1}^\infty q\psi(q) = \infty.$ We may assume that $\psi(q) \leq q^{-1}$ for all $q \in \mathbb{N}$, since if we denote $\psi'(q) = \min\{\psi(q), q^{-1}\},$ then $\sum_{q=1}^\infty q\psi(q) = \infty$ is equivalent to $\sum_{q=1}^\infty q\psi'(q) = \infty$ and $\mathscr{A}^y_{2,1}(\psi') \subseteq \mathscr{A}^y_{2,1}(\psi)$.

For the inhomogeneous parameter $y\in \mathbb{R}$, we define the sequences $(a_d, b_d)_{d \in \mathbb{Z} \setminus\{0\}}$ and $(a_\mathbf{q}, b_\mathbf{q})_{\mathbf{q} \in \mathbb{Z}^2 \setminus \{\mathbf{0}\}}$ as given in (\ref{a,b}) and (\ref{a,b,q}). We now present the main lemma of this article.

\begin{lemma}\label{mainlemma}
    Let $\psi: \mathbb{N} \to [0,\frac{1}{2})$ satisfy $\psi(q) \leq q^{-1}$ for all $q \in \mathbb{N},$ and let $y \in \mathbb{R}$ denote an inhomogeneous parameter. Define $(a_d,b_d)_{d \in \mathbb{Z} \setminus \{0\}}$ as in (\ref{a,b}), and  $\tilde{A}$ as in Definition \ref{def}. Let integers $q,r,d,e$ satisfy $1\leq r < q$ and $1\leq |e| < |d|$. If $\gcd(d,e) \geq 3$ and $r \geq 2d^2,$ then
    $$|\tilde A_{1,1} (d, \psi(q)) \cap \tilde A_{1,1}(e, \psi(r))| = 0.$$
\end{lemma}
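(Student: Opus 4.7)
The plan is to show that the two defining inequalities, together with the coprimality conditions and the Diophantine property of $(a_d, b_d)$ from (\ref{a,b}), are inconsistent, so that $\tilde A_{1,1}(d, \psi(q)) \cap \tilde A_{1,1}(e, \psi(r))$ is in fact empty.

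Suppose $x$ lies in the intersection, so that there exist integers $p, s$ with $|dx - p - y| < \psi(q)$, $\gcd(d, b_d p + a_d) = 1$, and $|ex - s - y| < \psi(r)$, $\gcd(e, b_e s + a_e) = 1$. Multiplying the first inequality by $e$ and the second by $d$ and subtracting eliminates $x$, yielding
$$|ep - ds + (e - d) y| < |e| \psi(q) + |d| \psi(r).$$
Now I would multiply through by $b_d$ and use $|b_d y - a_d| < 1/|d|$ from (\ref{a,b}) to replace $b_d y$ by the integer $a_d$ at the cost of an error at most $|e-d|/|d|$. This produces the integer
$$N := b_d(ep - ds) + (e - d) a_d$$
with $|N| < b_d(|e|\psi(q) + |d|\psi(r)) + |e-d|/|d|$. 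Using $b_d \leq |d|$, the assumption $\psi(q) \leq 1/q$, $\psi(r) \leq 1/r$, together with $q > r \geq 2d^2$ and $|e| < |d|$, this bound becomes
$$|N| < \frac{|d||e|}{q} + \frac{|d|^2}{r} + \frac{|e-d|}{|d|} < \tfrac{1}{2} + \tfrac{1}{2} + 2 = 3.$$

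On the divisibility side, $g := \gcd(d, e) \geq 3$ divides each of $d$, $e$, and $d - e$, so $g \mid N$. Together with $|N| < 3$, this forces $N = 0$. Writing $d = g d'$, $e = g e'$ with $\gcd(|d'|, |e'|) = 1$, the relation $N = 0$ rearranges to $b_d(e' p - d' s) = (d' - e') a_d$. Reducing modulo $|d'|$ yields $e'(b_d p + a_d) \equiv 0 \pmod{|d'|}$, and since $\gcd(|d'|, e') = 1$ it follows that $|d'|$ divides $b_d p + a_d$. But $|d'|$ also divides $d$, and by hypothesis $\gcd(d, b_d p + a_d) = 1$, so $|d'| = 1$. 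Then $|d| = g = \gcd(d, e) \leq |e|$, contradicting $|e| < |d|$.

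The main obstacle is the quantitative bound $|N| < 3$: each of the hypotheses $r \geq 2d^2$, $|e| < |d|$, and $r < q$ is used precisely to squeeze the three summands below $1/2$, $2$, and $1/2$ respectively. Relaxing any of them would allow $|N|$ to reach a nonzero multiple of $g$, and the divisibility argument that finishes the proof would break down. Note that the coprimality condition $\gcd(e, b_e s + a_e) = 1$ plays no role; only the condition on $d$ is needed for the contradiction.
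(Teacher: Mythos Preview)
Your proof is correct and follows essentially the same approach as the paper: eliminate $x$, use the Dirichlet data $|b_d y - a_d| < 1/|d|$ to pass to the integer $N = b_d(ep-ds)+(e-d)a_d$, and combine the bound $|N|<3$ with $\gcd(d,e)\mid N$ and the coprimality $\gcd(d,b_dp+a_d)=1$. The only cosmetic difference is packaging: the paper phrases the same computation as ``distance between interval centres exceeds sum of radii'' and disposes of the case $N=0$ in one line by observing that $\frac{b_dp+a_d}{d}$ is already in lowest terms with $|d|>|e|$, whereas you reach the same contradiction via the factorisation $d=gd'$, $e=ge'$ and reduction modulo $|d'|$. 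Your closing remark that only the coprimality condition on $d$ is needed matches the paper, which in fact proves the stronger statement $\tilde A_{1,1}(d,\psi(q))\cap A_{1,1}(e,\psi(r))=\emptyset$.
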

\begin{proof}
    We are going to show that $\tilde A_{1,1} (d, \psi(q)) \cap A_{1,1}(e, \psi(r)) = \emptyset$ which is a stronger result. The definitions of the sets $\tilde A_{1,1}(d, \psi(q))$ and $A_{1,1}(e, \psi(r))$ give
    \begin{equation}\label{union}
        \begin{aligned}
            \tilde A_{1,1}(d, \psi(q)) &= \bigcup_{\substack{p \in \mathbb{Z}\\ \gcd(d, b_d p + a_d)=1}} \left(\frac{p+y}{d} -\frac{\psi(q)}{|d|}, \frac{p+y}{d} + \frac{\psi(q)}{|d|} \right) \cap [0,1),\\
            A_{1,1}(e, \psi(r)) &= \bigcup_{s \in \mathbb{Z}} \left(\frac{s+y}{e} - \frac{\psi(r)}{|e|}, \frac{s+y}{e} + \frac{\psi(r)}{|e|} \right) \cap [0,1).
        \end{aligned}
    \end{equation}
    
    The distance between the centers of intervals in the unions (\ref{union}) is
    \begin{align}\label{distance}
        \left| \frac{p+y}{d} - \frac{s+y}{e} \right| &= \left| \frac{p+ \frac{a_d}{b_d} +y -\frac{a_d}{b_d}}{d} - \frac{s+ \frac{a_d}{b_d} + y - \frac{a_d}{b_d}}{e} \right| \nonumber \\
        &= \left| \frac{1}{b_d} \left(\frac{b_d p + a_d}{d} - \frac{b_d s + a_d}{e} \right) + \left(y - \frac{a_d}{b_d} \right) \left(\frac{1}{d} - \frac{1}{e} \right)\right|.
    \end{align}
    Since $\gcd(d, b_d p+a_d) = 1$ and $|e| < |d|$, $\frac{b_d p+a_d}{d}$ is a reduced fraction with a denominator larger than that of $\frac{b_d s+a_d}{e}.$ Thus the two fractions $\frac{b_d p+a_d}{d}$ and $\frac{b_d s+a_d}{e}$ are distinct, so the first term of (\ref{distance}) satisfies
    $$\left| \frac{1}{b_d} \left(\frac{b_d p + a_d}{d} - \frac{b_d s + a_d}{e} \right) \right| \geq \frac{1}{b_d \text{lcm}(d,e)} \geq \frac{3}{b_d |de|},$$
    where the last inequality comes from $\gcd(d,e) \geq 3$. By the definition of $(a_d,b_d)$, we have the inequality $|b_d y - a_d| < \frac{1}{|d|}$. Since $|e|<|d|$, we also have $|e-d|<2|d|$, thus the second term of (\ref{distance}) satisfies
    $$\left|\left(y - \frac{a_d}{b_d} \right) \left(\frac{1}{d} - \frac{1}{e} \right)\right| < \frac{1}{b_d|d|} \frac{|e-d|}{|de|} < \frac{2}{b_d |de|}.$$
    Therefore, from (\ref{distance}) we have
    $$\left| \frac{p+y}{d} - \frac{s+y}{e} \right| > \frac{1}{b_d |de|}.$$

    We now compare the distance between centers with the sum of the radii of intervals in the unions (\ref{union}), given by $\frac{\psi(q)}{|d|} + \frac{\psi(r)}{|e|}$. Since $\psi(q) \leq q^{-1}$, $2d^2 \leq r<q,$ and $|e|<|d|$, it follows that
    $$\frac{\psi(q)}{|d|} + \frac{\psi(r)}{|e|} \leq \frac{1}{q|d|} + \frac{1}{r|e|} < \frac{2}{r|e|} \leq \frac{1}{d^2|e|}.$$
    By the definition of $b_d$, we have $b_d \leq |d|$. Therefore
    $$\frac{\psi(q)}{|d|} + \frac{\psi(r)}{|e|} < \frac{1}{b_d |de|} < \left| \frac{p+y}{d} - \frac{s+y}{e} \right|$$
    for any $p,s \in \mathbb{Z}$ such that $\gcd(d, b_dp+a_d) = 1$. This implies that
    $$\left(\frac{p+y}{d} -\frac{\psi(q)}{|d|}, \frac{p+y}{d} + \frac{\psi(q)}{|d|} \right) \cap \left(\frac{s+y}{e} - \frac{\psi(r)}{|e|}, \frac{s+y}{e} + \frac{\psi(r)}{|e|} \right) = \emptyset,$$
    which allows us to conclude that $\tilde A_{1,1} (d, \psi(q)) \cap A_{1,1}(e, \psi(r)) = \emptyset$.
\end{proof}
The following theorem shows that the sequence $(\tilde{A}_\mathbf{q})_{\mathbf{q} \in \mathbb{Z}^2 \setminus \{\mathbf{0}\}}$ is QIA.
\begin{theorem}\label{thm3}
    Let $(n,m) = (2,1)$ and let $y \in \mathbb{R}$ be an inhomogeneous parameter. Suppose $\psi : \mathbb{N} \to [0,\frac{1}{2})$ satisfies $\psi(q) \leq q^{-1}$ for all $q \in \mathbb{N}$ and $\sum_{q=1}^\infty q\psi(q) =\infty$. Define $(a_\mathbf{q}, b_\mathbf{q})_{\mathbf{q} \in \mathbb{Z}^2 \setminus \{\mathbf{0}\}}$ as in (\ref{a,b,q}) and  $\tilde{A}_\mathbf{q}=\tilde{A}_{2,1}(\mathbf{q}, \psi(|\mathbf{q}|))$ as in Definition \ref{def}. Then $(\tilde A_\mathbf{q})_{\mathbf{q} \in \mathbb{Z}^2 \setminus \{\mathbf{0}\}}$ is quasi-independent on average.
\end{theorem}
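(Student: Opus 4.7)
The plan is to establish the QIA inequality directly by bounding the denominator $\sum_{|\mathbf{q}|,|\mathbf{r}|\leq Q}|\tilde A_\mathbf{q}\cap \tilde A_\mathbf{r}|$ above and the numerator $\bigl(\sum_{|\mathbf{q}|\leq Q}|\tilde A_\mathbf{q}|\bigr)^2$ below, so that their ratio has a positive $\limsup$. By Lemma \ref{comparable} the numerator is $\asymp\bigl(\sum_{q=1}^Q q\psi(q)\bigr)^2$, which diverges by hypothesis.

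The denominator is decomposed according to whether $\mathbf{q}\parallel\mathbf{r}$. Non-parallel pairs are handled by Lemma \ref{notparallel}: $|\tilde A_\mathbf{q}\cap\tilde A_\mathbf{r}|\leq|A_\mathbf{q}||A_\mathbf{r}|=4\psi(|\mathbf{q}|)\psi(|\mathbf{r}|)$, and summing yields at most $\bigl(\sum_{|\mathbf{q}|\leq Q} 2\psi(|\mathbf{q}|)\bigr)^2 \asymp \bigl(\sum_q q\psi(q)\bigr)^2$. For parallel pairs, write $\mathbf{q}=d\mathbf{k}$ and $\mathbf{r}=e\mathbf{k}$ with $\mathbf{k}\in\mathbb{Z}^2\setminus\{\mathbf{0}\}$ primitive and $d,e\in\mathbb{Z}\setminus\{0\}$, and invoke Lemma \ref{intersection} to reduce each intersection to the one-dimensional quantity $|\tilde A_{1,1}(d,\psi(|d||\mathbf{k}|))\cap\tilde A_{1,1}(e,\psi(|e||\mathbf{k}|))|$. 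By symmetry assume $|d|\geq|e|$ and double at the end.

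Lemma \ref{mainlemma} is the crux: it kills the intersection whenever $|d|>|e|$, $\gcd(d,e)\geq3$, and $|e||\mathbf{k}|\geq 2d^2$. The surviving pairs thus fall into three families: (i) the diagonal $|d|=|e|$; (ii) $|d|>|e|$ with $\gcd(d,e)\leq 2$; and (iii) $|d|>|e|$ with $|e||\mathbf{k}|<2d^2$. Family (i), consisting of $d=\pm e$, contributes at most $2\sum_{|\mathbf{q}|\leq Q}|\tilde A_\mathbf{q}|\asymp\sum_q q\psi(q)$ by Lemma \ref{comparable}. For families (ii) and (iii), apply Lemma \ref{basicineq} to dominate each intersection by $\psi(|d||\mathbf{k}|)\psi(|e||\mathbf{k}|)+\psi(|d||\mathbf{k}|)\gcd(d,e)/|d|$. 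The $\psi\cdot\psi$ terms, summed across all surviving pairs, are at most $\bigl(\sum|\tilde A_\mathbf{q}|\bigr)^2\ll\bigl(\sum_q q\psi(q)\bigr)^2$, which is absorbed.

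The principal obstacle is estimating the $\psi(|d||\mathbf{k}|)\gcd(d,e)/|d|$ contribution. In family (ii), the constraint $\gcd(d,e)\leq 2$ together with $|e|<|d|$ produces $\sum_{|e|<|d|,\,\gcd\leq 2}\gcd(d,e)/|d|=O(1)$ per $d$; after reparametrizing $\mathbf{q}=d\mathbf{k}$, the total collapses to $\ll\sum_\mathbf{q}\psi(|\mathbf{q}|)\asymp\sum_q q\psi(q)$, again absorbed. The delicate case is family (iii), where the range of $|e|$ is short but the gcd may be as large as $|d|$. Here one exploits the divisor-sum bound $\sum_{|e|\leq X}\gcd(d,e)\ll X\tau(d)$ combined with the standing assumption $\psi(q)\leq q^{-1}$; both hypotheses are essential, and careful bookkeeping over the three parameters $d$, $e$, and $\mathbf{k}$ shows that this contribution is dominated by $\bigl(\sum_q q\psi(q)\bigr)^2$ along a suitable subsequence $Q_j\to\infty$—which is exactly what the $\limsup$ in the QIA definition requires.
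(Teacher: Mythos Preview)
Your outline matches the paper's proof: the same reduction via Lemmas \ref{comparable}, \ref{notparallel}, and \ref{intersection} to a one-dimensional parallel sum, the same three-way split after invoking Lemma \ref{mainlemma}, and the same use of Lemma \ref{basicineq} on the survivors. Families (i) and (ii) are handled just as in the paper's Step~1 and Step~3.

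The only soft spot is family (iii), which you defer to ``careful bookkeeping'' along a subsequence. The paper's Step~2 carries this out and in fact gets a bound valid for \emph{all} $Q$, not just a subsequence: reparametrize by $q=|\mathbf q|$ and $d\mid q$; the constraint $|e||\mathbf k|<2d^2$ forces $d>(q/2)^{1/3}$; then your divisor bound with $X=d$ gives $\sum_{|e|<d}\gcd(d,e)\ll d\tau(d)$, and since there are $\ll q/d$ primitive $\mathbf k$ with $|\mathbf k|=q/d$, the whole contribution is
\[
\ll q\psi(q)\sum_{\substack{d\mid q\\ d>(q/2)^{1/3}}}\frac{\tau(d)}{d}\leq q\psi(q)\cdot\frac{\tau(q)^2}{(q/2)^{1/3}}\ll q\psi(q),
\]
using $\tau(q)\ll q^\epsilon$. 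One small correction: the hypothesis $\psi(q)\leq q^{-1}$ enters only through Lemma \ref{mainlemma} (to obtain the cutoff $r<2d^2$), not in this subsequent estimate.
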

\begin{proof}
    Our goal is to show that $\sum_{1 \leq |\mathbf{q}|,|\mathbf{r}| \leq Q} |\tilde A_\mathbf{q} \cap \tilde A_\mathbf{r}| \ll \left( \sum_{1\leq |\mathbf{q}| \leq Q} |\tilde A_\mathbf{q}| \right)^2.$ We first reduce the problem to the one-dimensional case in Step 1, and then bound the arising terms in Step 2 and 3.
    
    \textit{Step 1. Reduction to 1-dimension:} By Lemma \ref{comparable} and Lemma \ref{notparallel}, it is enough to show that
    $$\sum_{\substack{1\leq |\mathbf{q}|,|\mathbf{r}| \leq Q \\ \mathbf{q} \parallel \mathbf{r}}} |\tilde A_\mathbf{q} \cap \tilde A_\mathbf{r}| \ll \left( \sum_{1\leq |\mathbf{q}| \leq Q} |A_\mathbf{q}| \right)^2.$$
    Since $\sum_{|\mathbf{q}|=1}^\infty |A_\mathbf{q}| =\infty$, considering large enough $Q$ so that $\sum_{|\mathbf{q}|=1}^Q |A_\mathbf{q}| \geq 2$,
    \begin{align*}
        \sum_{1\leq |\mathbf{q}| \leq Q} \sum_{\substack{|\mathbf{r}| = |\mathbf{q}| \\ \mathbf{r} \parallel \mathbf{q}}} |\tilde A_\mathbf{q} \cap \tilde A_\mathbf{r}| &= \sum_{1\leq |\mathbf{q}| \leq Q} \left( |\tilde A_\mathbf{q} \cap \tilde A_\mathbf{q}| + |\tilde A_\mathbf{q} \cap \tilde A_\mathbf{-q}| \right) \\
        &\leq 2 \sum_{1\leq |\mathbf{q}| \leq Q} |A_\mathbf{q}| \leq \left( \sum_{1\leq |\mathbf{q}| \leq Q} |A_\mathbf{q}| \right)^2.
    \end{align*}
    Therefore, it is enough to show that
    \begin{equation}\label{QIA}
        \sum_{1\leq q \leq Q} \sum_{|\mathbf{q}|=q} \sum_{1\leq r < q} \sum_{\substack{|\mathbf{r}| = r \\ \mathbf{r} \parallel \mathbf{q}}} |\tilde A_\mathbf{q} \cap \tilde A_\mathbf{r}| \ll \left( \sum_{1\leq |\mathbf{q}| \leq Q} |A_\mathbf{q}| \right)^2.
    \end{equation}
    For a fixed $1 \leq q\leq Q$, the sum in the left hand side of (\ref{QIA}) becomes
    \begin{equation}\label{qfixed}
        \sum_{|\mathbf{q}|=q} \sum_{1\leq r<q} \sum_{\substack{|\mathbf{r}| = r \\ \mathbf{r} \parallel \mathbf{q}}} |\tilde A_\mathbf{q} \cap \tilde A_\mathbf{r}| = \sum_{\substack{d \in \mathbb{N} \\ d \mid q}} \sum_{\substack{|\mathbf{q}|= q \\ \gcd(\mathbf{q}) = d}} \sum_{1\leq r < q} \sum_{\substack{|\mathbf{r}| = r \\ \mathbf{r} \parallel \mathbf{q}}} |\tilde A_\mathbf{q} \cap \tilde A_\mathbf{r}|.
    \end{equation}
    For a fixed $\mathbf{q} \in \mathbb{Z}^2$ with $|\mathbf{q}| = q$ and $\gcd(\mathbf{q}) = d$, we have a primitive vector $\mathbf{k} \in \mathbb{Z}^2$ such that $\mathbf{q} = d\mathbf{k}$. If $\mathbf{r} \parallel \mathbf{q}$ with $|\mathbf{r}| =r$, then $\mathbf{r} \parallel \mathbf{k}$. Since $|\mathbf{k}| = \frac{q}{d}$, it follows that $\frac{q}{d} \mid r$ and $\mathbf{r} = \pm \frac{r}{q/d} \mathbf{k}$, where the sign is determined by whether $\mathbf{q}$ and $\mathbf{r}$ point in the same or opposite direction. Let $e = \pm \frac{r}{q/d} \in \mathbb{Z} \setminus \{0\}$ so that $\mathbf{r} = e \mathbf{k}$ and $1 \leq |e|<d$. By Lemma \ref{intersection},
    \begin{align*}
        \sum_{1\leq r<q} \sum_{\substack{|\mathbf{r}| = r \\ \mathbf{r} \parallel \mathbf{q}}} |\tilde A_\mathbf q \cap \tilde A_\mathbf r| &= \sum_{\substack{1 \leq r <q \\ \frac{q}{d} \mid r \\ (e = \pm rd/q)}} |\tilde A_{d\mathbf{k}} \cap \tilde A_{e\mathbf{k}}|\\
        &= \sum_{\substack{1 \leq |e| < d \\ (r=|qe/d|)}} |\tilde A_{2,1}(d\mathbf{k}, \psi(q)) \cap \tilde A_{2,1}(e\mathbf{k}, \psi(r))| \\
        &= \sum_{\substack{1 \leq |e| < d \\ (r=|qe/d|)}} |\tilde A_{1,1}(d, \psi(q)) \cap \tilde A_{1,1}(e, \psi(r))|
    \end{align*}
    Therefore, (\ref{qfixed}) equals to
    \begin{equation}\label{twosums}
        \begin{aligned}
            \sum_{\substack{d \in \mathbb{N} \\ d \mid q}} &\sum_{\substack{|\mathbf{q}|= q \\ \gcd(\mathbf{q}) = d}} \sum_{\substack{1 \leq |e| < d \\ \gcd(d,e) \geq 3 \\ (r=|qe/d|)}} |\tilde A_{1,1}(d, \psi(q)) \cap \tilde A_{1,1}(e, \psi(r))| \\
            &+ \sum_{\substack{d \in \mathbb{N} \\ d \mid q}} \sum_{\substack{|\mathbf{q}|= q \\ \gcd(\mathbf{q}) = d}} \sum_{\substack{1 \leq |e| < d \\ \gcd(d,e) \leq 2 \\ (r=|qe/d|)}} |\tilde A_{1,1}(d, \psi(q)) \cap \tilde A_{1,1}(e, \psi(r))|
        \end{aligned}
    \end{equation}

    \textit{Step 2. First term of (\ref{twosums}):} This argument follows from \cite[Lemma 6]{Hau23}, but we include it here for completeness. By Lemma \ref{mainlemma}, we only need to consider the case $r<2d^2$. Since $r<2d^2$ is equivalent to  $|e| = \frac{rd}{q} < \frac{2d^3}{q},$ we have
    \begin{align*}
        \sum_{\substack{d \in \mathbb{N} \\ d \mid q}} \sum_{\substack{|\mathbf{q}|= q \\ \gcd(\mathbf{q}) = d}} &\sum_{\substack{1 \leq |e| < d \\ \gcd(d,e) \geq 3 \\ (r=|qe/d|)}} |\tilde A_{1,1}(d, \psi(q)) \cap \tilde A_{1,1}(e, \psi(r))| \\
        &\leq \sum_{\substack{d \in \mathbb{N} \\ d \mid q}} \sum_{\substack{|\mathbf{q}|= q \\ \gcd(\mathbf{q}) = d}} \sum_{\substack{1 \leq |e| < d \\ |e| < 2d^3/q \\ (r=|qe/d|)}} |\tilde A_{1,1}(d, \psi(q)) \cap \tilde A_{1,1}(e, \psi(r))| \\
        &\overset{\text{Lemma \ref{basicineq}}}{\ll} \sum_{\substack{d \in \mathbb{N} \\ d \mid q}} \sum_{\substack{|\mathbf{q}|= q \\ \gcd(\mathbf{q}) = d}} \sum_{\substack{1 \leq |e| < d \\ |e| < 2d^3/q \\ (r=|qe/d|)}} \left(\psi(q)\psi(r) + \frac{\psi(q)\gcd(d,e)}{d} \right).
    \end{align*}
    The first term $\psi(q)\psi(r)$ in the last sum equals to $\frac{1}{4}|A_\mathbf{q}||A_\mathbf{r}|,$ so it satisfies the QIA condition when summed over $1 \leq q \leq Q$. Therefore, we only need to consider the second term $\frac{\psi(q)\gcd(d,e)}{d}$. Since the number of vectors $\mathbf{q}$ satisfying $|\mathbf{q}| = q$ and $\gcd(\mathbf{q}) = d$ is comparable to $\varphi(\frac{q}{d}) \leq \frac{q}{d},$ where $\varphi(\cdot)$ denotes the Euler totient function, it follows that
    \begin{align*}
        \sum_{\substack{d \in \mathbb{N} \\ d \mid q}} &\sum_{\substack{|\mathbf{q}|= q \\ \gcd(\mathbf{q}) = d}} \sum_{\substack{1 \leq |e| < d \\ |e| < 2d^3/q}} \frac{\psi(q)\gcd(d,e)}{d}
        \ll \sum_{\substack{d \in \mathbb{N} \\ d \mid q}} \frac{q}{d} \sum_{\substack{1 \leq |e| < d \\ |e| < 2d^3/q}} \frac{\psi(q)\gcd(d,e)}{d} \\
        &= q\psi(q) \sum_{\substack{d \in \mathbb{N} \\ d \mid q}} \sum_{\substack{1 \leq |e| < d \\ |e| < 2d^3/q}} \frac{\gcd(d,e)}{d^2}
        \leq q\psi(q) \sum_{\substack{d \mid q \\ d > (q/2)^{1/3}}} \frac{1}{d^2} \sum_{1\leq |e| <d} \gcd(d,e)\\
        &\ll q\psi(q) \sum_{\substack{d \mid q \\ d > (q/2)^{1/3}}} \frac{\tau(d)}{d} \leq q\psi(q) \tau(q) \frac{\tau(q)}{(q/2)^{1/3}} \asymp q^{2/3} \psi(q) \tau(q)^2 \ll q \psi(q).
    \end{align*}
    Here $\tau(\cdot)$ denotes the number of divisors, thus $\sum_{1\leq e\leq d} \gcd(d,e) \leq d\tau(d),$ and the last inequality comes from the fact that $\tau(q) \ll q^{\epsilon}$ for any $\epsilon>0$ \cite[Thm 315]{HW08}. Therefore the first term of (\ref{twosums}) meets the QIA condition.

    \textit{Step 3. Second term of (\ref{twosums}):} By the same logic, we only need to consider
    \begin{align*}
        \sum_{\substack{d \in \mathbb{N} \\ d \mid q}} &\sum_{\substack{|\mathbf{q}|= q \\ \gcd(\mathbf{q}) = d}} \sum_{\substack{1 \leq |e| < d \\ \gcd(d,e) \leq 2}} \frac{\psi(q)\gcd(d,e)}{d} \\
        &= \sum_{\substack{d \in \mathbb{N} \\ d \mid q}} \sum_{\substack{|\mathbf{q}|= q \\ \gcd(\mathbf{q}) = d}} \sum_{\substack{1 \leq |e| < d \\ \gcd(d,e) =1}} \frac{\psi(q)}{d} + \sum_{\substack{d \in \mathbb{N} \\ d \mid q}} \sum_{\substack{|\mathbf{q}|= q \\ \gcd(\mathbf{q}) = d}} \sum_{\substack{1 \leq |e| < d \\ \gcd(d,e) =2}} \frac{2\psi(q)}{d} \\
        &\ll \psi(q) \sum_{\substack{d \in \mathbb{N} \\ d \mid q}} \varphi \left(\frac{q}{d}\right) \left( \sum_{\substack{1 \leq |e| < d \\ \gcd(d,e) =1}} \frac{1}{d} + \sum_{\substack{1 \leq |e| < d \\ \gcd(d,e) =2}} \frac{2}{d} \right) \\
        &\ll \psi(q) \sum_{\substack{d \in \mathbb{N} \\ d \mid q}} \frac{\varphi(q/d) \varphi(d)}{d}  \leq \psi(q) \varphi(q) \sum_{\substack{d \in \mathbb{N} \\ d \mid q}} \frac{1}{d} = \psi(q)\frac{\varphi(q) \sigma(q)}{q} <q\psi(q),
    \end{align*}
    where $\sigma(q) = \sum_{d|q} d$ and the last inequality comes from \cite[Thm 329]{HW08}.
\end{proof}

\vspace{.1cm}
\subsection{QIA gives full measure}
We now prove that showing QIA is sufficient to guarantee full measure. While our main interest lies in the case of an inhomogeneous parameter and a univariate function, the following theorem is stated and proved in the general setting of moving targets and a multivariate function.
\begin{theorem}\label{thm4}
    Let $n,m \in \mathbb{N},\ \Psi: \mathbb{Z}^n \setminus\{\mathbf{0}\} \to [0,\frac{1}{2})$ be given, and $\mathbf{y}_\mathbf{q}, \frac{\mathbf{a}_\mathbf{q}}{b_\mathbf{q}}, \tilde{A}_\mathbf{q}$ be defined as in Definition \ref{def}. Suppose that $\sum_\mathbf{q} |\tilde A_\mathbf q| = \infty$ and the sequence $(\tilde A_\mathbf{q})_{\mathbf q \in \mathbb{Z}^n \setminus \{\mathbf 0\}}$ is QIA. Then
    $$\left|\limsup_{|\mathbf q| \to \infty} \tilde A_\mathbf{q} \right| = 1.$$
\end{theorem}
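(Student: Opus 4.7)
I would combine the divergence Borel--Cantelli (Chung--Erd\H{o}s) lemma with a Lebesgue density argument. The QIA hypothesis together with Chung--Erd\H{o}s already yields a global lower bound $|\limsup_{|\mathbf{q}| \to \infty} \tilde{A}_{\mathbf{q}}| \geq c$ for the QIA constant $c > 0$. The goal is to upgrade this to a \emph{local} lower bound: for every ball $B \subseteq [0,1)^{nm}$,
\[
\bigl|B \cap \limsup_{|\mathbf{q}| \to \infty} \tilde{A}_{\mathbf{q}}\bigr| \geq c'\,|B|
\]
for some $c' > 0$ depending only on $c$. Once this is in hand, the conclusion is immediate: if $|\limsup \tilde{A}_{\mathbf{q}}| < 1$, its complement has positive measure, so the Lebesgue density theorem provides a density point at which the complement has density arbitrarily close to $1$ on small balls, contradicting the uniform lower bound $c'$ on those balls.

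\textbf{Key step: local QIA.} For a fixed ball $B$, I would apply Chung--Erd\H{o}s to the tail family $(\tilde{A}_{\mathbf{q}} \cap B)_{|\mathbf{q}| \geq M}$, where $M = M(B)$ is large enough that the discretisation scale $1/|\mathbf{q}|$ is much smaller than the radius of $B$. The set $\tilde{A}_{\mathbf{q}}$ is invariant under the sublattice $\{\mathbf{v} \in [0,1)^{nm} : \mathbf{q}\mathbf{v} \equiv \mathbf{0} \pmod{\mathbb{Z}^{m}}\}$ whose fundamental domain has diameter $O(1/|\mathbf{q}|)$. This periodic structure gives an equidistribution estimate
\[
|\tilde{A}_{\mathbf{q}} \cap B| = |\tilde{A}_{\mathbf{q}}|\,|B|\,(1 + o(1)), \qquad |\mathbf{q}| \to \infty,
\]
together with an analogous upper bound controlling $|\tilde{A}_{\mathbf{q}} \cap \tilde{A}_{\mathbf{r}} \cap B|$ by $|\tilde{A}_{\mathbf{q}} \cap \tilde{A}_{\mathbf{r}}|\,|B|$ (up to a constant). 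Inserting these into the Chung--Erd\H{o}s quotient transfers the global QIA ratio to an identical ratio on $B$, with one extra factor of $|B|$, yielding the desired local lower bound. The fact that truncating at $|\mathbf{q}| \geq M$ does not destroy divergence of $\sum_{\mathbf{q}} |\tilde{A}_{\mathbf{q}}|$ is routine, since removing finitely many terms from a divergent series is harmless.

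\textbf{Main obstacle.} The most delicate ingredient is the local upper bound on pairwise intersections when $\mathbf{q} \parallel \mathbf{r}$, since the common invariance lattice of $\tilde{A}_{\mathbf{q}}$ and $\tilde{A}_{\mathbf{r}}$ may be much coarser than for a single $\mathbf{q}$, so equidistribution on $B$ is not automatic. For $\mathbf{q} \nparallel \mathbf{r}$ Lemma \ref{notparallel} factorises the intersection and allows a clean transfer; the parallel pairs instead must be bounded globally (in the spirit of Step~1 of the proof of Theorem \ref{thm3}, where their total contribution is shown to be $\ll\sum_{|\mathbf{q}|\leq Q}|\tilde{A}_{\mathbf{q}}|$) and absorbed into the denominator of the Chung--Erd\H{o}s quotient without destroying the lower bound. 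Making the $o(1)$ error terms in the local estimates uniform enough to survive the limit $Q \to \infty$ will be the main technical point to check.
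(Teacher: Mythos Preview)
Your overall architecture---localize Chung--Erd\H{o}s via an equidistribution estimate for the individual sets $\tilde A_{\mathbf q}$, then finish with a density-type argument---is exactly what the paper does, and your ``equidistribution estimate'' is precisely the content of the lemma the paper proves immediately after Theorem~\ref{thm4}: for every open $U$ there is $Q_U$ such that $|\tilde A_{\mathbf q}\cap U|\ge C_m\,|\tilde A_{\mathbf q}|\,|U|$ once $|\mathbf q|\ge Q_U$.

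Where your plan diverges is in the endgame, and this is where your ``main obstacle'' bites. You aim for a \emph{linear} local bound $|B\cap E|\ge c'|B|$ and then invoke Lebesgue density. For that you need the denominator of the local Chung--Erd\H{o}s quotient to scale like $|B|$, which in turn requires $|\tilde A_{\mathbf q}\cap\tilde A_{\mathbf r}\cap B|\lesssim |\tilde A_{\mathbf q}\cap\tilde A_{\mathbf r}|\,|B|$ for essentially all pairs. Your fallback for the bad (parallel) pairs---bound them globally and absorb, citing Step~1 of Theorem~\ref{thm3}---does not work under the bare QIA hypothesis of Theorem~\ref{thm4}: QIA only gives $\sum_{\mathbf q,\mathbf r}|\tilde A_{\mathbf q}\cap\tilde A_{\mathbf r}|\lesssim(\sum|\tilde A_{\mathbf q}|)^2$, not the linear bound $\ll\sum|\tilde A_{\mathbf q}|$ that Step~1 of Theorem~\ref{thm3} obtains (and that step uses the specific $(2,1)$ structure together with Lemma~\ref{mainlemma}, not just QIA). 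Plugging the quadratic bound into your denominator leaves you with $|B\cap E|\gtrsim |B|^2$, which is too weak for Lebesgue density on small balls.

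The paper (via \cite[Proposition~1]{AR22}) sidesteps this entirely. It never attempts an upper bound on $|\tilde A_{\mathbf q}\cap\tilde A_{\mathbf r}\cap U|$ beyond the trivial $|\tilde A_{\mathbf q}\cap\tilde A_{\mathbf r}|$. Combining that with the single-set lower bound and QIA gives only $|U\cap E|\ge c_0 C_m^2|U|^2$, quadratic in $|U|$. But this suffices: if $|E|<1$, choose an open $U\supseteq E^c$ with $|U\cap E|<\epsilon$; since $|U|\ge 1-|E|>0$ is fixed, the quadratic lower bound is a fixed positive constant, contradicting $\epsilon\to0$. So the density argument is run once with a \emph{fixed} open set approximating the complement, not on shrinking balls, and your obstacle simply does not arise.
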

\begin{proof}
    It follows from the following lemma and the proof of \cite[Proposition 1]{AR22}. The argument in \cite[Proposition 1]{AR22} concerns the set $A_\mathbf{q}$, but the following lemma, a modification of \cite[Lemma 7]{AR22}, enables its application to $\tilde{A}_\mathbf{q}$.
\end{proof}

The following lemma generalizes \cite[Lemma 7]{AR22} to $\tilde{A}_\mathbf{q}$, or \cite[Proposition 1]{BHV24} to arbitrary dimensions. The proof follows a similar technique as in \cite{AR23}.
\begin{lemma}
    The same setting as in Theorem \ref{thm4} is used. There exists a constant $C_m>0$ such that for every open set $U \subset [0,1)^{nm}$, every $\mathbf{q} \in \mathbb{Z}^n$ with big enough $|\mathbf q|$ (there exists $Q_U >0 $ such that for all $|\mathbf{q}| \geq Q_U$) satisfies
    $$|\tilde A_\mathbf q \cap U| \geq C_m |\tilde A_\mathbf{q}| |U|.$$
\end{lemma}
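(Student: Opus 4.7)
Plan. The approach is to reduce the estimate to small cubes and then apply the Fourier-series technique behind \cite[Lemma 7]{AR22}, combined with the M\"obius decomposition derived in the proof of Lemma \ref{measure}.

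First, I would use the inner regularity of Lebesgue measure: given the open set $U$, choose a finite disjoint collection of small open cubes $C_1,\ldots,C_N\subset U$ (of side length at most some $\rho>0$) with $\sum_{i=1}^N|C_i|\geq|U|/2$. It then suffices to prove that for every such cube $C$ there exist a constant $c_m>0$ depending only on $m$ and a threshold $Q_C>0$ such that $|\tilde A_\mathbf q\cap C|\geq c_m|\tilde A_\mathbf q||C|$ whenever $|\mathbf q|\geq Q_C$; setting $C_m:=c_m/2$ and $Q_U:=\max_i Q_{C_i}$ and summing over $i$ then gives the lemma.

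Second, for a fixed cube $C$, I would use the M\"obius expansion from the proof of Lemma \ref{measure},
\begin{equation*}
\mathbf{1}_{\tilde A_\mathbf q}(\mathbf x)=\sum_{\substack{l\,\mid\,\gcd(\mathbf q)\\ \gcd(l,\,b_\mathbf q)=1}}\mu(l)\,\mathbf{1}_{E_l}(\mathbf x),
\end{equation*}
with $E_l:=A^{\mathbf z_l}(\mathbf q/l,\,\psi(|\mathbf q|)/l)$ for the shifted parameter $\mathbf z_l=(\mathbf y_\mathbf q-b_\mathbf q^{(l)}\mathbf a_\mathbf q)/l$. Integrating against $\mathbf{1}_C$ gives $|\tilde A_\mathbf q\cap C|=\sum_l\mu(l)|E_l\cap C|$. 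Each $E_l$ has the same shape as $A_\mathbf Q$ for $\mathbf Q=\mathbf q/l$, so the Fourier-series argument of \cite[Lemma 7]{AR22}---which exploits the decay of $\widehat{\mathbf{1}_C}$ in the direction of $\mathbf Q$---yields $|E_l\cap C|=|E_l||C|+R_l(\mathbf q,C)$ with $R_l\to 0$ as $|\mathbf q/l|\to\infty$.

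Third, combining these gives $|\tilde A_\mathbf q\cap C|-|\tilde A_\mathbf q||C|=\sum_l\mu(l)R_l$, and the remaining task is to show this tail is a small fraction of $|\tilde A_\mathbf q||C|$ once $|\mathbf q|\geq Q_C$. The main obstacle is that for $l$ close to $\gcd(\mathbf q)$ the quotient $|\mathbf q/l|$ may stay bounded, so the per-term Fourier estimate is weakest precisely where one otherwise needs only a triangle-inequality bound $|R_l|\leq 2|E_l||C|$. This is overcome by balancing two complementary observations: (a) $|E_l|=(2\psi(|\mathbf q|)/l)^m$ decays in $l$, so the large-$l$ contribution from the triangle bound is a small (universal) fraction of the main term $|\tilde A_\mathbf q||C|=(2\psi(|\mathbf q|))^m|C|\prod_{p\,\mid\,\gcd(\mathbf q),\,p\,\nmid\,b_\mathbf q}(1-p^{-m})$; and (b) for small-to-moderate $l$, the Fourier decay forces $R_l\to 0$ as $|\mathbf q|\to\infty$. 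Choosing $Q_C$ sufficiently large relative to $\rho$ balances these two, delivers the per-cube estimate, and hence the lemma.
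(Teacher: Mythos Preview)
Your reduction to cubes and the M\"obius decomposition of $\mathbf 1_{\tilde A_\mathbf q}$ are correct, but the balancing argument in step (a)/(b) breaks down for $m=1$, which is exactly the case needed for Theorem~\ref{thm1}. The difficulty is that taking absolute values term by term in the M\"obius sum throws away cancellation that is essential when $m=1$. Concretely, with $d=\gcd(\mathbf q)$ and (for simplicity) $b_\mathbf q=1$,
\[
\frac{\sum_{l\mid d,\ l>L}|E_l|}{|\tilde A_\mathbf q|\,|C|}
=\frac{\sum_{l\mid d,\ l>L}l^{-1}}{|C|\prod_{p\mid d}(1-p^{-1})}.
\]
If $d$ is the product of the first $t$ primes, the numerator is $\prod_{p\mid d}(1+p^{-1})-O(\log L)\asymp\log t$ while the denominator is $\asymp|C|/\log t$; the ratio therefore tends to infinity with $t$, regardless of the choice of $L$. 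So the ``large-$l$'' triangle-inequality piece is \emph{not} a small universal fraction of the main term. The same obstruction hits step (b): even if the Fourier argument gave $|R_l|\le\varepsilon|E_l||C|$ uniformly for $l\le L$, summing yields $\varepsilon\sum_{l\le L}l^{-1}$, which must be compared to $\prod_{p\mid d}(1-p^{-1})$; since the latter can be arbitrarily small in $d$, no choice of $\varepsilon$ (and hence no $Q_C$ independent of $\mathbf q$) works. For $m\ge 2$ your scheme does go through, since then $\prod_p(1-p^{-m})\ge\zeta(m)^{-1}$ and $\sum_{l>L}l^{-m}\to 0$.

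The paper avoids this by \emph{not} decomposing $\tilde A_\mathbf q$ first and then estimating equidistribution of each piece. Instead it slices along the row carrying $|\mathbf q|$, and in each slice counts lattice points in a translate of a cube of side $|\mathbf q|r$: the count is $(|\mathbf q|r/l)^m+O\big((|\mathbf q|r/l)^{m-1}+1\big)$. Summing with M\"obius weights, the main terms recombine to the product over primes, while the error terms---summed in absolute value---carry an extra factor $1/(|\mathbf q|r)$ or $1/(|\mathbf q|r)^m$ relative to the main term. It is this extra $1/|\mathbf q|$ (not present in the qualitative Fourier bound you invoke) that beats the growth of $\tau(|\mathbf q|)/\varphi(|\mathbf q|)$ and rescues the $m=1$ case. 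If you want to salvage the Fourier route, you would need a \emph{quantitative} version of the equidistribution estimate for $E_l$ with an explicit saving in $|\mathbf q/l|$ strong enough to absorb the $(\log\log d)^2$ loss; the statement of \cite[Lemma 7]{AR22} alone does not provide this.
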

\begin{proof}
    From the proof of \cite[Lemma 7]{AR22}, it is enough to consider the case when $U$ is a ball $W$ in $\mathbb R^{nm} /\mathbb Z^{nm} \cong [0,1)^{nm}$ under the maximum norm. Without loss of generality, we may assume that $|\mathbf q|$ appears in the first coordinate of $\mathbf q$. For a subset $B \subset [0,1)^{nm}$ and a matrix $\mathbf{z} \in [0,1)^{(n-1)m}$, we define
    $$B_\mathbf{z} := \left\{ \mathbf x \in [0,1)^m : \begin{pmatrix} \mathbf x \\ \mathbf z \end{pmatrix} \in B\right\}.$$
    Let $Y$ be the image of $W$ by the projection $[0,1)^{nm} \to [0,1)^{(n-1)m}$ dropping the first row. Then for $\mathbf z \in Y$, we have
    \begin{align*}
        (\tilde A_\mathbf q \cap W)_\mathbf z = \left\{ \mathbf x \in W_\mathbf z : \exists \mathbf{p} \in \mathbb{Z}^m, \left| \mathbf q \begin{pmatrix} \mathbf{x} \\ \mathbf{z} \end{pmatrix} - \mathbf p - \mathbf y_\mathbf q \right| < \Psi(\mathbf q) , \ \gcd(b_\mathbf q \mathbf p +\mathbf a_\mathbf q, \mathbf q) = 1 \right\} \\
        = \left\{ \mathbf x \in W_\mathbf z : \exists \mathbf{p} \in \mathbb{Z}^m, \left| |\mathbf q| \mathbf x + \mathbf q \begin{pmatrix} \mathbf{0} \\ \mathbf{z} \end{pmatrix} - \mathbf p - \mathbf y_\mathbf q \right| < \Psi(\mathbf q) , \ \gcd(b_\mathbf q \mathbf p +\mathbf a_\mathbf q, \mathbf q) = 1 \right\}.
    \end{align*}
    Let $r$ be the radius of $W$, and $\frac{W}{2}$ denote the ball with same center of $W$ with radius $\frac{r}{2}$. Assume $|\mathbf{q}| > \frac{1}{r},$ and denote
    $$N_\mathbf{z}:=\#\left\{\mathbf p \in \mathbb Z^m : \frac{\mathbf p}{|\mathbf q|} - \frac{1}{|\mathbf{q}|} \mathbf q \begin{pmatrix} \mathbf{0} \\ \mathbf{z} \end{pmatrix} + \frac{\mathbf y_\mathbf q}{|\mathbf q|} \in \left(\frac{W}{2}\right)_{\!\!\mathbf{z}} , \ \gcd(b_\mathbf q \mathbf p +\mathbf a_\mathbf q, \mathbf q) = 1 \right\}.$$
    Since $(\tilde{A}_\mathbf{q})_\mathbf{z}$ is a union of small balls centered at $\frac{\mathbf p}{|\mathbf q|} - \frac{1}{|\mathbf{q}|} \mathbf q {\scriptstyle \begin{pmatrix} \mathbf{0} \\ \mathbf{z} \end{pmatrix}} + \frac{\mathbf y_\mathbf q}{|\mathbf q|}$ with radius $\frac{\Psi(\mathbf{q})}{|\mathbf{q}|}$ (which is smaller than $\frac{r}{2}$), at least $N_\mathbf{z}$ of these balls are contained in $W_\mathbf{z}$, hence $|(\tilde A_\mathbf q \cap W)_\mathbf z| \geq N_\mathbf{z} \left(\frac{2\Psi(\mathbf q)}{|\mathbf q|}\right)^m.$ We want to find a lower bound of $N_\mathbf{z}$. If we denote 
    $$V_\mathbf{z} := |\mathbf q| \left(\frac{W}{2}\right)_{\!\!\mathbf{z}} + \mathbf q \begin{pmatrix} \mathbf{0} \\ \mathbf{z} \end{pmatrix} -\mathbf y_\mathbf q,$$
    then it follows from (\ref{mobius}) that
        $$N_\mathbf{z} = \#\left\{\mathbf p \in \mathbb Z^m : \mathbf p \in  V_\mathbf{z}, \ \gcd(b_\mathbf q \mathbf p +\mathbf a_\mathbf q, \mathbf q) = 1 \right\} = \sum_{\mathbf{p} \in \mathbb{Z}^m} \mathds{1}_{V_\mathbf{z}} (\mathbf{p}) \!\!\!\! \sum_{l | \gcd(b_\mathbf q \mathbf p +\mathbf a_\mathbf q, \mathbf q)} \!\!\!\! \mu(l).$$
    Using the same argument as in the proof of Lemma \ref{measure}, if $l | \gcd(b_\mathbf{q} \mathbf{p} + \mathbf{a}_\mathbf{q}, \mathbf{q}),$ then $\gcd(l, b_\mathbf{q}) = 1$, and so there is an inverse $b_\mathbf{q}^{(l)} \in \mathbb{N}$ of $b_\mathbf{q}$ modulo $l$. Hence, $l|b_\mathbf{q}\mathbf{p}+\mathbf{a}_\mathbf{q}$ if and only if $\mathbf{p} \in l\mathbb{Z}^m - b_\mathbf{q}^{(l)}\mathbf{a}_\mathbf{q},$ and thus
    \begin{align*}
        N_\mathbf{z} &= \sum_{l | \mathbf{q}} \mu(l) \sum_{\substack{\mathbf{p} \in \mathbb{Z}^m \\ l| b_\mathbf{q} \mathbf{p} + \mathbf{a}_\mathbf{q}}} \!\!\! \mathds{1}_{V_\mathbf{z}}(\mathbf{p}) = \!\!\!\! \sum_{\substack{l|\mathbf{q} \\ \gcd(l,b_\mathbf{q})=1}} \!\!\!\! \mu(l) \!\!\!\! \sum_{\mathbf{p} \in l\mathbb{Z}^m - b_\mathbf{q}^{(l)}\mathbf{a}_\mathbf{q}} \!\!\!\! \mathds{1}_{V_\mathbf{z}}(\mathbf{p}) \\
        &= \sum_{l| \mathbf{q}, \ \gcd(l, b_\mathbf{q}) =1} \mu(l) \cdot \#\left(l\mathbb{Z}^m \cap (V_\mathbf{z} + b_\mathbf{q}^{(l)}\mathbf{a}_\mathbf{q})\right).
    \end{align*}
    We have a positive constant $k_m$ such that for any $m$-dimensional ball $B$ with side length $L$ and a positive integer $q \in \mathbb{N}$, $\left|\#(q\mathbb{Z}^m \cap B) - (\frac{L}{q})^m\right| \leq k_m\left((\frac{L}{q})^{m-1}+1\right)$. Since $V_\mathbf{z} + b_\mathbf{q}^{(l)}\mathbf{a}_\mathbf{q}$ is a ball with side length $|\mathbf{q}|r,$ it follows from (\ref{mobius}) that
    \begin{align*}
        N_\mathbf{Z} &\geq \sum_{l| \mathbf{q}, \ \gcd(l, b_\mathbf{q}) =1} \mu(l) \left(\frac{|\mathbf{q}|r}{l}\right)^m - \sum_{l| \mathbf{q}, \ \gcd(l, b_\mathbf{q}) =1} k_m \left(\left(\frac{|\mathbf{q}|r}{l}\right)^{m-1} +1 \right)\\
        &= (|\mathbf{q}| r)^m \prod_{p | \mathbf q, p \nmid b_\mathbf q} \left(1-\frac{1}{p^m} \right) - k_m (|\mathbf{q}| r)^{m-1} \prod_{p | \mathbf q, p \nmid b_\mathbf q} \left(1+\frac{1}{p^{m-1}} \right) -k_m 2^t,
    \end{align*}
    where $t$ is the number of primes $p$ such that $p |\mathbf{q}$ and $p\nmid b_\mathbf{q}$. Therefore,
    \begin{align}
        \left|\tilde A_\mathbf{q} \cap W\right| &= \int_Y \left|\left(\tilde A_\mathbf q \cap W\right)_\mathbf z\right| d\mathbf{z} \nonumber \\
        &\geq (2r)^{(n-1)m} \times \inf_{\mathbf{z} \in Y} N_\mathbf{z} \times \left(\frac{2\Psi(\mathbf q)}{|\mathbf q|}\right)^m \nonumber \\
        &\geq (2r)^{nm} \Psi(\mathbf{q})^m \prod_{p | \mathbf q, p \nmid b_\mathbf q} \left(1-\frac{1}{p^m} \right) \label{1st}\\
        &\qquad - \frac{k_m (2r)^{nm} \Psi(\mathbf q)^m}{|\mathbf{q}|r} \prod_{p | \mathbf q, p \nmid b_\mathbf q} \left(1+\frac{1}{p^{m-1}} \right) \label{2nd}\\
        &\qquad - k_m 2^t (2r)^{(n-1)m} \left(\frac{2\Psi(\mathbf q)}{|\mathbf q|}\right)^m. \label{3rd}
    \end{align}
    We observe that (\ref{1st}) equals to $\frac{1}{2^m}|W||\tilde A_\mathbf q|$ from $|W|=(2r)^{nm}$ and Lemma \ref{measure}. Moreover, as $|\mathbf{q}| \to  \infty$, we have
    \begin{align*}
        \frac{(\ref{2nd})}{(\ref{1st})} &= \frac{\frac{k_m}{r|\mathbf q|} \prod_{p | \mathbf q, p \nmid b_\mathbf q} \left(1+\frac{1}{p^{m-1}} \right)}{\prod_{p | \mathbf q, p \nmid b_\mathbf q} \left(1-\frac{1}{p^m} \right)} = \frac{k_m}{r} \frac{1}{|\mathbf q|} \prod_{p | \mathbf q, p \nmid b_\mathbf q} \frac{1+ \frac{1}{p^{m-1}}}{1-\frac{1}{p^m}} \\
        &\leq \frac{k_m}{r} \frac{1}{|\mathbf q|} \prod_{p | \mathbf q, p \nmid b_\mathbf q} \frac{2}{1-\frac{1}{p}} \leq \frac{k_m}{r} \frac{1}{|\mathbf q|} \prod_{p \mid |\mathbf q|} \frac{2}{1-\frac{1}{p}} \to 0,
    \end{align*}
    where the last convergence follows from the relations $|\mathbf{q}|\prod_{p| |\mathbf{q}|} (1-p^{-1}) = \varphi(|\mathbf{q}|)$, $\prod_{p| |\mathbf{q}|} 2 \leq \tau(|\mathbf{q}|)$ and $\frac{\tau(|\mathbf{q}|)}{\varphi(|\mathbf{q}|)} \to 0$ as $|\mathbf{q}| \to \infty$ (see \cite[Thm 315 and 327]{HW08}). A similar calculation yields
    \begin{align*}
        \frac{(\ref{3rd})}{(\ref{1st})} &= \frac{k_m 2^t}{r^m |\mathbf{q}|^m \prod_{p | \mathbf q, p \nmid b_\mathbf q} \left(1-\frac{1}{p^m} \right)} = \frac{k_m}{r^m |\mathbf{q}|^m} \prod_{p | \mathbf q, p \nmid b_\mathbf q} \frac{2}{1-\frac{1}{p^m}} \\
        &\leq \frac{k_m}{r^m |\mathbf{q}|^m} \prod_{p \mid |\mathbf q|} \frac{2}{1-\frac{1}{p}}\to 0.
    \end{align*}
    Therefore, if we let $C_m < \frac{1}{2^m}$, then we conclude that $|\tilde{A}_\mathbf{q} \cap W| \geq C_m |\tilde{A}_\mathbf{q}||W|$ for sufficiently large $|\mathbf{q}|$.
\end{proof}
\begin{proof}[Proof of Theorem \ref{thm1}]
    The convergence part is a direct corollary of the Borel--Cantelli lemma, so it suffices to prove the divergence part. For the inhomogeneous parameter $y \in \mathbb{R}$, we define the sequence $(a_\mathbf{q}, b_\mathbf{q})_{\mathbf{q} \in \mathbb{Z}^2 \setminus \{\mathbf{0}\}}$ as in (\ref{a,b,q}). We assume that $\psi(q) \leq q^{-1}$ for all $q \in \mathbb{N}$, as stated in the first paragraph of this section. Thus, by Theorem \ref{thm3}, the sequence $(\tilde{A}_\mathbf{q})_{\mathbf{q} \in \mathbb{Z}^2 \setminus \{\mathbf{0}\}}$ is QIA. By Lemma \ref{comparable}, we have $\sum_{\mathbf{q} \in \mathbb{Z}^2 \setminus \{\mathbf{0}\}} |\tilde{A}_\mathbf{q}| = \infty.$ Therefore, by Theorem \ref{thm4}, $\limsup_{|\mathbf{q}| \to \infty} \tilde{A}_\mathbf{q}$ has full measure. Since $\limsup_{|\mathbf{q}| \to \infty} \tilde{A}_\mathbf{q} \subseteq \mathscr{A}_{2,1}^y(\psi)$, the theorem follows.
\end{proof}

\section{Proof of Theorem \ref{thm2}}
\subsection{Quasi-independence on average}
In this section, we consider the case where the inhomogeneous parameter $\mathbf{y} \in \mathbb{Q}^m$ is rational. Let $\mathbf{y} = \frac{\mathbf{a}}{b}$ with $\mathbf{a} \in \mathbb{Z}^m, b\in \mathbb{N}$ satisfying $\gcd(\mathbf{a},b) = 1$. We then fix the sequence $(\mathbf{a}_\mathbf{q}, b_\mathbf{q}) = (\mathbf{a},b)$, which arises in Definition \ref{def}. For this case, we especially denote $A''_{n,m}(\mathbf{q}, \delta) = \tilde{A}_{n,m}(\mathbf{q},\delta)$ as in Definition \ref{def}, that is,
$$A''_{n,m}(\mathbf{q},\delta) = \{\mathbf{x} \in [0,1)^{nm} : |\mathbf{qx} - \mathbf{p} - \mathbf{y}| < \delta, \ \gcd(\mathbf{q}, b\mathbf{p} + \mathbf{a}) = 1\}.$$

Let $\psi: \mathbb{N} \to \mathbb{R}_{\geq 0}$ be a univariate approximating function . If we denote $\mathscr{A}''^{\mathbf{y}}_{n,m}(\psi) =\limsup_{|\mathbf{q}| \to \infty} A''_{n,m}(\mathbf{q}, \psi(|\mathbf{q}|))$, then we have $\mathscr{A}''^{\mathbf{y}}_{n,m} \subseteq \mathscr{A}^{\mathbf{y}}_{n,m}.$ We will show that $(A''_{n,m}(\mathbf{q}, \psi(|\mathbf{q}|)))_{\mathbf{q} \in \mathbb{Z}^m \setminus \{\mathbf{0}\}}$ is quasi-independent on average in the divergent case when $nm\geq 2$.

\begin{lemma}[\cite{Gal65}, Lemma 1]\label{gallagher}
    Let $U(\delta) = (-\delta,\delta)^m \subset \mathbb{R}^m$. Then for $a,b \geq 0$,
    $$\sum_{\mathbf{j} \in \mathbb{Z}^m \setminus \{\mathbf{0}\}} |U(a) \cap (U(b)+\mathbf{j})| \ll (ab)^m.$$
\end{lemma}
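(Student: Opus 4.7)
The plan is to prove the lemma by a direct pointwise-times-count estimate, combined with a dichotomy that kills the degenerate regime $a+b \leq 1$.

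The key pointwise observation to exploit is that $U(a) \cap (U(b) + \mathbf{j})$ is an axis-parallel box contained in both $U(a)$ and $U(b) + \mathbf{j}$, so its measure is at most $(2\min(a,b))^m$; moreover, it is empty unless the one-dimensional projections overlap in every coordinate, i.e.\ $|j_k| < a+b$ for each $k = 1, \ldots, m$. This would reduce the problem to a lattice-point count inside the $\ell^\infty$-cube of radius $a+b$, multiplied by the pointwise volume bound.

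I would then split into two cases. If $a + b \leq 1$, the condition $|j_k| < 1$ forces $j_k = 0$ for every $k$, so every $\mathbf{j} \neq \mathbf{0}$ contributes zero and the sum vanishes identically. Otherwise $a + b > 1$, in which case the number of lattice points with $|j_k| < a+b$ in each coordinate is at most $(2(a+b)+1)^m \leq (3(a+b))^m$. Combining with the pointwise bound yields
$$\sum_{\mathbf{j} \in \mathbb{Z}^m \setminus \{\mathbf{0}\}} |U(a) \cap (U(b) + \mathbf{j})| \leq (3(a+b))^m (2\min(a,b))^m = 6^m \bigl((a+b)\min(a,b)\bigr)^m.$$
The elementary inequality $(a+b)\min(a,b) \leq 2\max(a,b)\min(a,b) = 2ab$ would then close the estimate with implicit constant $12^m$.

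The only conceptual obstacle to watch out for is that the naïve count-times-volume bound $\#\{\mathbf{j}\} \cdot (2\min(a,b))^m$ is not always $\ll (ab)^m$; it becomes weak precisely when $\min(a,b) \gg ab$, equivalently when $\max(a,b)$ is small. However, in exactly that regime $a+b$ is forced to be small as well, so the first case of the dichotomy already annihilates the sum, and the estimate in the second case is clean because $(a+b)\min(a,b)$ can be compared to $\max(a,b)\min(a,b) = ab$. Setting up this splitting is the substantive idea; once in place, the estimate is elementary.
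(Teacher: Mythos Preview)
Your argument is correct. The dichotomy on $a+b$ is exactly the right device: when $a+b \leq 1$ no nonzero lattice point can contribute, and when $a+b > 1$ the crude count $(2(a+b)+1)^m \leq (3(a+b))^m$ combined with the pointwise bound $(2\min(a,b))^m$ and the inequality $(a+b)\min(a,b) \leq 2ab$ gives the claim with constant $12^m$. Every step checks out.

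As for comparison with the paper: there is nothing to compare. The paper does not prove this lemma; it simply quotes it as \cite{Gal65}, Lemma~1, and uses it as a black box in the proof of Lemma~\ref{rationalqia}. Your self-contained elementary proof is therefore a genuine addition rather than a paraphrase.
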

\begin{lemma}\label{rationalqia}
    Let $n=1, m \in \mathbb{N}.$ For $q \in \mathbb{N} , r \in \mathbb{Z}\setminus \{0\}$ with $|r|<q$ and $\delta_1, \delta_2 \geq 0$,
    $$|A''_{1,m}(q, \delta_1)\cap A''_{1,m}(r,\delta_2)| \ll (b\delta_1\delta_2)^m.$$
\end{lemma}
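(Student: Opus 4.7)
The plan is to prove this estimate via a Gallagher-type lattice sum, decomposing each $A''_{1,m}$ set into its constituent boxes and using the coprimality condition to eliminate the degenerate ``aligned-box'' pairs. First, I would write
$$A''_{1,m}(q,\delta_1) \;=\; \bigsqcup_{\substack{p \in (\mathbb{Z}/q\mathbb{Z})^m \\ \gcd(q,\,bp+\mathbf{a})=1}} B_{p,q}, \qquad B_{p,q} = \frac{p+\mathbf{y}}{q} + \frac{1}{q}U(\delta_1),$$
and analogously for $A''_{1,m}(r,\delta_2)$, so that the intersection measure expands as $\sum_{(p,s)} |B_{p,q} \cap B_{s,r}|$. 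Each box intersection depends only on the center displacement
$$\mathbf{c}_{p,s} \;:=\; \frac{p+\mathbf{y}}{q} - \frac{s+\mathbf{y}}{r} \;\equiv\; \frac{r(bp+\mathbf{a}) - q(bs+\mathbf{a})}{bqr} \pmod 1,$$
and as $(p,s)$ ranges over $(\mathbb{Z}/q\mathbb{Z})^m \times (\mathbb{Z}/|r|\mathbb{Z})^m$ this displacement traces out a coset of $\frac{1}{\mathrm{lcm}(q,|r|)}\mathbb{Z}^m / \mathbb{Z}^m$ with multiplicity $\gcd(q,|r|)^m$ per displacement.

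The crucial step is to show that the coprimality condition eliminates the aligned case $\mathbf{c}_{p,s}\equiv\mathbf{0}\pmod 1$: such alignment means $r(bp+\mathbf{a})\equiv q(bs+\mathbf{a})\pmod{bqr}$, so reducing modulo $q$ gives $r(bp+\mathbf{a})\equiv \mathbf{0}\pmod q$, and writing $d=\gcd(q,|r|)$ and cancelling the factor $r/d$ (coprime to $q/d$) forces $q/d$ to divide every component of $bp+\mathbf{a}$. Since $|r|<q$ yields $q/d\geq 2$, this contradicts $\gcd(q,bp+\mathbf{a})=1$; hence only displacements $\mathbf{c}_{p,s}\not\equiv \mathbf{0}$ survive.

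For those I would rescale by $bqr$: each surviving displacement becomes a nonzero integer vector $\tilde{\mathbf{j}}=bqr\,\mathbf{c}_{p,s}$ in the coset $(r-q)\mathbf{a}+b\gcd(q,|r|)\mathbb{Z}^m\setminus\{\mathbf{0}\}$, and each box intersection rescales as $|B_{p,q}\cap B_{s,r}|=(bqr)^{-m}\bigl|U(b|r|\delta_1)\cap(U(bq\delta_2)+\tilde{\mathbf{j}})\bigr|$. Applying Gallagher's lemma (Lemma \ref{gallagher}) to the resulting off-diagonal lattice sum bounds it by $\ll(b|r|\delta_1\cdot bq\delta_2)^m$; combining with the $\gcd(q,|r|)^m$ multiplicity and the $(bqr)^{-m}$ prefactor collapses everything to the desired $\ll(b\delta_1\delta_2)^m$.

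The main obstacle is that Gallagher's lemma is formulated for sums over $\mathbb{Z}^m\setminus\{\mathbf{0}\}$, whereas our surviving sum runs over a coset of the sublattice $b\gcd(q,|r|)\mathbb{Z}^m$ shifted by $(r-q)\mathbf{a}$. A clean way to handle this -- and arguably the more natural route overall -- is to encode the coprimality condition by M\"obius inversion from the start, mirroring the proof of Lemma \ref{measure}: each M\"obius summand is then an ordinary intersection of two $A$-type sets with reduced modulus, directly amenable to Gallagher after a translation, and the aligned-pair exclusion reappears as an automatic cancellation across the signed M\"obius sum.
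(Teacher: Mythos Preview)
Your approach is essentially the paper's: decompose into boxes, observe that the coprimality condition on the $q$-side alone forces the center displacement to be nonzero (the paper phrases this as $\frac{b\mathbf{p}+\mathbf{a}}{q}\neq\frac{b\mathbf{s}+\mathbf{a}}{r}$, the left side being a reduced fraction with strictly larger denominator), and feed the surviving off-diagonal sum into Lemma~\ref{gallagher}.

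The one substantive slip is the rescaling factor. Your product
\[
(bq|r|)^{-m}\cdot\gcd(q,|r|)^m\cdot(b|r|\delta_1\cdot bq\delta_2)^m \;=\; \gcd(q,|r|)^m\,(b\delta_1\delta_2)^m,
\]
not $(b\delta_1\delta_2)^m$; the extra $\gcd(q,|r|)^m$ is exactly the penalty for bounding a sparse coset sum by the full $\mathbb{Z}^m\setminus\{\mathbf{0}\}$ sum in Lemma~\ref{gallagher}. This is the real content of your ``main obstacle,'' but the remedy is not M\"obius inversion. The paper simply rescales by $b\,\mathrm{lcm}(q,|r|)=\dfrac{bq|r|}{\gcd(q,r)}$ rather than $bq|r|$: since $\gcd(q,r)$ always divides $q(b\mathbf{s}+\mathbf{a})-r(b\mathbf{p}+\mathbf{a})=b(q\mathbf{s}-r\mathbf{p})+(q-r)\mathbf{a}$, the displacement
\[
\mathbf{j}=\frac{q(b\mathbf{s}+\mathbf{a})-r(b\mathbf{p}+\mathbf{a})}{\gcd(q,r)}
\]
is already an honest element of $\mathbb{Z}^m\setminus\{\mathbf{0}\}$, the rescaled radii pick up an extra $\gcd(q,r)^{-1}$ each, and Lemma~\ref{gallagher} applies verbatim with no coset issue. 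With that single change in scale your bookkeeping collapses to $(b\delta_1\delta_2)^m$ exactly as intended, and the M\"obius detour is unnecessary.
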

\begin{proof}
    We modify the calculation of \cite{Gal65}. Consider $A_{1,m}(r,\delta_2)$ containing $A''_{1,m}(r,\delta_2)$.
    \begin{align*}
        |&A''_{1,m}(q, \delta_1) \cap A_{1,m}(r,\delta_2)| \\
        &= \sum_{\substack{\mathbf{p} \in \{1, \cdots, q\}^m \\ \gcd(q, b\mathbf{p} + \mathbf{a}) = 1}} \sum_{\mathbf{s} \in \mathbb{Z}^m} \left|\left\{U\left(\frac{\delta_1}{q}\right) + \frac{\mathbf{p} + \mathbf{y}}{q}\right\} \cap \left\{U\left(\frac{\delta_2}{|r|}\right) + \frac{\mathbf{s} + \mathbf{y}}{r}\right\}\right| \\
        &= \frac{\gcd(q,r)^m}{(bq|r|)^m} \sum_{\substack{\mathbf{p} \in \{1, \cdots, q\}^m \\ \gcd(q, b\mathbf{p} + \mathbf{a}) = 1}} \sum_{\mathbf{s} \in \mathbb{Z}^m} \left|\left\{ U\left(\frac{b|r|\delta_1}{\gcd(q,r)}\right) + \frac{br(\mathbf{p} + \mathbf{y})}{\gcd(q,r)} \right\} \right.\\ 
        &\qquad \qquad \qquad \qquad \qquad \qquad \qquad \qquad \cap \left. \left\{U\left(\frac{bq\delta_2}{\gcd(q,r)}\right) + \frac{bq(\mathbf{s} + \mathbf{y})}{\gcd(q,r)}\right\}\right| \\
        &= \frac{\gcd(q,r)^m}{(bq|r|)^m} \sum_{\substack{\mathbf{p} \in \{1, \cdots, q\}^m \\ \gcd(q, b\mathbf{p} + \mathbf{a}) = 1}} \sum_{\mathbf{s} \in \mathbb{Z}^m} \left|\left\{ U\left(\frac{b|r|\delta_1}{\gcd(q,r)}\right)\right\} \right.\\ 
        &\qquad \qquad \qquad \qquad \qquad \qquad \cap \left. \left\{U\left(\frac{bq\delta_2}{\gcd(q,r)}\right) + \frac{bq(\mathbf{s} + \mathbf{y}) - br(\mathbf{p} + \mathbf{y})}{\gcd(q,r)}\right\}\right| \\
        &\leq \frac{\gcd(q,r)^{2m}}{(bq|r|)^m} \sum_{\mathbf{j} \in \mathbb{Z}^m\setminus \{\mathbf{0}\}} \left|U\left(\frac{b|r|\delta_1}{\gcd(q,r)}\right) \cap \left\{U\left(\frac{bq\delta_2}{\gcd(q,r)} + \mathbf{j} \right)\right\}\right|.
    \end{align*}
    Here, $\mathbf{j} \in \mathbb{Z}^m$ in the last inequality can be chosen from the equality
    \begin{equation}\label{choosej}
        bq(\mathbf{s} + \mathbf{y}) - br(\mathbf{p} + \mathbf{y}) = q(b\mathbf{s} + \mathbf{a}) - r(b\mathbf{p} + \mathbf{a}) =\gcd(q,r) \mathbf{j},
    \end{equation}
    and $\mathbf{j} \neq \mathbf{0}$ since $\frac{b\mathbf{p} + \mathbf{a}}{q} \neq \frac{b\mathbf{s} + \mathbf{a}}{r}$ by the fact that $\frac{b\mathbf{p} + \mathbf{a}}{q}$ is a reduced fraction with a larger denominator of that of $\frac{b\mathbf{s} + \mathbf{a}}{r}$. The number of pairs $(\mathbf{p}, \mathbf{s})$ satisfying (\ref{choosej}) is at most $\gcd(q,r)^m$, because if (\ref{choosej}) is satisfied by another $(\mathbf{p}',\mathbf{s}')$, then $q(\mathbf{s} - \mathbf{s}') = r(\mathbf{p} - \mathbf{p}')$, so $\mathbf{p} \equiv \mathbf{p}' \pmod{\frac{q}{\gcd(q,r)}}$ where $\mathbf{p}, \mathbf{p}' \in \{1, \cdots, q\}^m$. Therefore by Lemma \ref{gallagher},
    $$|A''_{1,m}(q,\delta_1) \cap A_{1,m}(r,\delta_2)| \ll \left(\frac{\gcd(q,r)^2}{bq|r|} \frac{b|r|\delta_1}{\gcd(q,r)} \frac{bq\delta_2}{\gcd(q,r)}\right)^m = (b\delta_1\delta_2)^m.$$
\end{proof}
\begin{theorem}\label{thm5}
    Let $nm \geq 2,\ \psi: \mathbb{N} \to [0,\frac{1}{2})$, and let $\mathbf{y} = \frac{\mathbf{a}}{b} \in \mathbb{Q}^m$ be a rational inhomogeneous parameter with $\mathbf{a} \in \mathbb{Z}^m, b \in \mathbb{N}$ and $\gcd(\mathbf{a},b)=1$. If $\sum_{q=1}^\infty q^{n-1}\psi(q)^m = \infty$, then the sequence $(A''_{n,m}(\mathbf{q}, \psi(|\mathbf{q}|)))_{\mathbf{q}\in\mathbb{Z}^n \setminus \{\mathbf{0}\}}$ is quasi-independent on average.
\end{theorem}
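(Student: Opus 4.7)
The plan is to bound $\sum_{|\mathbf{q}|,|\mathbf{r}| \leq Q} |A''_\mathbf{q} \cap A''_\mathbf{r}|$ by a constant multiple of $\left(\sum_{|\mathbf{q}| \leq Q} |A''_\mathbf{q}|\right)^2$. By Lemma \ref{comparable} the latter is $\asymp \left(\sum_{q=1}^Q q^{n-1}\psi(q)^m\right)^2$, which tends to infinity by hypothesis, so it suffices to produce an intersection bound of order $|A_\mathbf{q}||A_\mathbf{r}|$ (up to a constant in $b$ and $m$) for most pairs and to handle the diagonal separately.

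I will split the double sum into three regimes. First, non-parallel pairs $\mathbf{q} \nparallel \mathbf{r}$ (which exist only when $n \geq 2$) are handled by Lemma \ref{notparallel}, giving $|A''_\mathbf{q} \cap A''_\mathbf{r}| \leq |A_\mathbf{q}||A_\mathbf{r}|$. Second, for parallel pairs with $\mathbf{q} \neq \pm\mathbf{r}$, I write $\mathbf{q} = d\mathbf{k}$ and $\mathbf{r} = e\mathbf{k}$ with $\mathbf{k}$ a primitive integer vector and $|d| \neq |e|$. Since $(\mathbf{a}_\mathbf{q},b_\mathbf{q}) = (\mathbf{a},b)$ is constant and hence depends only on $\gcd(\mathbf{q})$, Lemma \ref{intersection} reduces the intersection to $|A''_{1,m}(d,\psi(|\mathbf{q}|)) \cap A''_{1,m}(e,\psi(|\mathbf{r}|))|$, and Lemma \ref{rationalqia} (applied with the larger of $|d|,|e|$ in the role of $q$) bounds this by $\ll b^m\psi(|\mathbf{q}|)^m\psi(|\mathbf{r}|)^m \asymp b^m|A_\mathbf{q}||A_\mathbf{r}|$. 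Third, the diagonal $\mathbf{q} = \pm\mathbf{r}$ contributes at most $|A''_\mathbf{q}|$ per pair. When $n = 1$ the first regime is empty and every off-diagonal pair falls into the second.

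Combining these bounds and noting that, trivially, the set of parallel pairs is contained in the set of all pairs, one obtains
$$\sum_{|\mathbf{q}|,|\mathbf{r}| \leq Q} |A''_\mathbf{q} \cap A''_\mathbf{r}| \ll (1+b^m)\left(\sum_{|\mathbf{q}| \leq Q} |A_\mathbf{q}|\right)^2 + 2\sum_{|\mathbf{q}| \leq Q} |A''_\mathbf{q}|.$$
The first term is $\asymp \left(\sum_{|\mathbf{q}| \leq Q} |A''_\mathbf{q}|\right)^2$ by Lemma \ref{comparable}, and the second is dominated by the same square once $\sum |A''_\mathbf{q}| \geq 1$, which happens for all sufficiently large $Q$ by the divergence hypothesis. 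Since $b$ is fixed once $\mathbf{y} = \mathbf{a}/b$ is chosen, the QIA ratio is bounded below by a positive constant depending only on $(n,m,b)$.

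The substance of the argument is already concentrated in Lemma \ref{rationalqia}: the condition $\gcd(q, b\mathbf{p}+\mathbf{a})=1$ forces $\frac{b\mathbf{p}+\mathbf{a}}{q}$ to be a reduced fraction, so for $|r|<q$ it has a strictly larger denominator than $\frac{b\mathbf{s}+\mathbf{a}}{r}$ and the two are distinct, which is exactly what eliminates the $\gcd(q,r)/|q|$ clustering term of Lemma \ref{basicineq} and yields the clean product bound $(b\delta_1\delta_2)^m$. This is the rational-inhomogeneous analogue of the primitivity condition used in the homogeneous case of \cite{BV09}, and once it is in place the regime decomposition above goes through essentially effortlessly; without it the parallel contribution would pick up an arithmetic factor summing over $\gcd(d,e)$ that would be the actual obstacle.
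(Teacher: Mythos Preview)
Your proposal is correct and follows essentially the same route as the paper: split into non-parallel pairs (handled by Lemma~\ref{notparallel}), parallel off-diagonal pairs (reduced via Lemma~\ref{intersection} to the one-dimensional bound of Lemma~\ref{rationalqia}), and the diagonal $\mathbf{q}=\pm\mathbf{r}$ (absorbed by divergence), then invoke Lemma~\ref{comparable} to pass between $\sum|A''_\mathbf{q}|$ and $\sum|A_\mathbf{q}|$. Your closing paragraph correctly identifies the coprimality condition in Lemma~\ref{rationalqia} as the whole point of introducing $A''$, mirroring the role of $A'$ in \cite{BV09}.
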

\begin{proof}
    We use a similar logic of Step 1 in the proof of Theorem \ref{thm3}. By Lemma \ref{comparable} and \ref{notparallel}, it is enough to show that
    $$\sum_{1\leq |\mathbf{q}| \leq Q} \sum_{\substack{1\leq |\mathbf{r}| < |\mathbf{q}| \\ \mathbf{r}\parallel \mathbf{q}}} \left|A''_{n,m}(\mathbf{q}, \psi(|\mathbf{q}|)) \cap A''_{n,m}(\mathbf{r}, \psi(|\mathbf{r}|))\right| \ll \left(\sum_{1\leq |\mathbf{q}| \leq Q} |A_{n,m}(\mathbf{q}, \psi(|\mathbf{q}|))| \right)^2.$$
    For $\mathbf{q}, \mathbf{r} \in \mathbb{Z}^n \setminus \{\mathbf{0}\}$ satisfying $\mathbf{q} \parallel \mathbf{r}$ and $1\leq |\mathbf{r}| < |\mathbf{q}|$, we have a primitive vector $\mathbf{k} \in \mathbb{Z}^n \setminus \{\mathbf{0}\}$ such that $\mathbf{q} = q\mathbf{k}, \mathbf{r} = r\mathbf{k}$, where $q=\gcd(\mathbf{q}), r=\pm \gcd(\mathbf{r})$. Since $(\mathbf{a}, b)$ is fixed, by Lemma \ref{intersection}, we have
    $$|A''_{n,m}(\mathbf{q}, \psi(|\mathbf{q}|)) \cap A''_{n,m}(\mathbf{r}, \psi(|\mathbf{r}|))| = |A''_{1,m}(q, \psi(|\mathbf{q}|)) \cap A''_{1,m}(r, \psi(|\mathbf{r}|))|.$$
    Therefore by Lemma \ref{rationalqia}, we have
    $$|A''_{n,m}(\mathbf{q}, \psi(|\mathbf{q}|)) \cap A''_{n,m}(\mathbf{r}, \psi(|\mathbf{r}|))| \ll \left(b\psi(|\mathbf{q}|)\psi(|\mathbf{r}|)\right)^m,$$
    where the right hand side is $(\frac{b}{4})^m |A_{n,m}(\mathbf{q}, \psi(|\mathbf{q}|))||A_{n,m}(\mathbf{r},\psi(|\mathbf{r}|))|.$ Since $b$ is fixed, we can conclude that the sequence $(A''_{n,m}(\mathbf{q}, \psi(|\mathbf{q}|)))_{\mathbf{q} \in \mathbb{Z}^n \setminus \{\mathbf{0}\}}$ is QIA.
\end{proof}
\begin{proof}[First proof of Theorem \ref{thm2}]
    We only need to prove the divergence part and consider the case where $\psi: \mathbb{N} \to [0,\frac{1}{2})$ as noted in the remark following Definition \ref{def}. By Theorem \ref{thm5}, the sequence $(A''_{n,m}(\mathbf{q}, \psi(|\mathbf{q}|)))_{\mathbf{q} \in \mathbb{Z}^n \setminus \{\mathbf{0}\}}$ is QIA. Since $A''$ is a special case of $\tilde A$, Theorem \ref{thm4} implies that $\mathscr{A}''^{\mathbf{y}}_{n,m}(\psi)$ has full measure. Therefore, we have $|\mathscr{A}^{\mathbf{y}}_{n,m}(\psi)| \geq |\mathscr{A}''^{\mathbf{y}}_{n,m}(\psi)| = 1.$
\end{proof}

\subsection{Zero-one law}
Since we proved Theorem \ref{thm2}, we know that the set $\mathscr{A}^{\mathbf{y}}_{n,m}(\psi)$ has measure either zero or one for $nm\geq 2$ and $\mathbf{y} \in \mathbb{Q}^m$. In this section, we provide an a priori proof of the zero-one law for cases with a rational inhomogeneous parameter, extending this result to all $n,m \in \mathbb{N}$. This result is of independent interest and also serves as an alternative proof of Theorem \ref{thm2}.

Beresnevich and Velani \cite{BV08} generalized the zero-one law originally formulated by Cassels \cite{Cas50} and Gallagher \cite{Gal61}. Their result \cite{BV08} states that for the homogeneous case with any $n,m \geq 1$, and a multivariate approximating function $\Psi: \mathbb{Z}^n \setminus \{\mathbf 0\} \to \mathbb R_{\geq 0}$, we have $|\mathscr A _{n,m} (\Psi)| \in \{0,1\}.$ The core idea of their proof in the homogeneous case relies on the ergodicity of the map $\mathbf{x} \mapsto 2\mathbf{x} \pmod 1$, and an invariant set under this map. The rational nature of the inhomogeneous parameter allows us to extend this approach.
\begin{theorem}\label{thm6}
    Let $n,m \in \mathbb{N}$, and let $\mathbf{y} = \frac{\mathbf{a}}{b} \in \mathbb{Q}^m$ be an inhomogeneous parameter with $\mathbf{a} \in \mathbb{Z}^m, b\in \mathbb{N}$ satisfying $\gcd(\mathbf{a}, b)=1$. For any multivariate approximating function $\Psi: \mathbb{Z}^n \setminus \{\mathbf 0\} \to \mathbb R_{\geq 0}$, we have
    $$|\mathscr{A}^\mathbf{y}_{n,m}(\Psi)| \in \{0,1\}.$$
\end{theorem}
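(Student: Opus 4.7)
The plan is to extend the Beresnevich--Velani \cite{BV08} ergodic-theoretic argument from the homogeneous case to the rational inhomogeneous setting. Set $k := b+1$, so that $k \ge 2$ (which ensures ergodicity of multiplication-by-$k$) and $k \equiv 1 \pmod b$ (which ensures compatibility with the inhomogeneous shift). Consider the measure-preserving ergodic transformation $T \colon [0,1)^{nm} \to [0,1)^{nm}$ defined by $T(\mathbf{x}) = k \mathbf{x} \pmod 1$. The key algebraic observation is $(k-1)\mathbf{y} = \tfrac{k-1}{b}\mathbf{a} \in \mathbb{Z}^m$, so that $k \mathbf{y} \equiv \mathbf{y} \pmod{\mathbb{Z}^m}$; this is exactly the property that allows $T$ to transport the inhomogeneous Diophantine condition involving $\mathbf{y}$ back to itself, rather than to a different rational target, and it is the feature of $\mathbf{y} \in \mathbb{Q}^m$ that is unavailable for irrational $\mathbf{y}$.

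The crux of the proof is to show that $\mathscr{A}^{\mathbf{y}}(\Psi)$ is $T$-invariant modulo Lebesgue null sets, i.e.\ $T^{-1} \mathscr{A}^{\mathbf{y}}(\Psi) = \mathscr{A}^{\mathbf{y}}(\Psi)$ up to null sets. Measure preservation immediately gives $|T^{-1} \mathscr{A}^{\mathbf{y}}(\Psi)| = |\mathscr{A}^{\mathbf{y}}(\Psi)|$, so it suffices to establish one of the two set-theoretic inclusions modulo null sets. A direct unfolding of the definitions shows that $T^{-1} \mathscr{A}^{\mathbf{y}}(\Psi)$ consists of the $\mathbf{x}$ for which $|k\mathbf{qx} - \mathbf{p} - \mathbf{y}| < \Psi(\mathbf{q})$ for infinitely many pairs $(\mathbf{q}, \mathbf{p})$. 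Rearranging this using $k\mathbf{y} \equiv \mathbf{y} \pmod{\mathbb{Z}^m}$ and decomposing $\mathbf{p} \in \mathbb{Z}^m$ by its residue modulo $k$, one aims to exhibit $T^{-1} \mathscr{A}^{\mathbf{y}}(\Psi)$ as a finite union of translates of $\mathscr{A}^{\mathbf{y}}(\Psi/k)$ indexed by the cosets $\mathbf{p} \pmod k$, each of which can be shown (via absorbing the shift into a fresh integer lattice point of the correct residue class, using $(k-1)\mathbf{y} \in \mathbb{Z}^m$) to lie inside $\mathscr{A}^{\mathbf{y}}(\Psi)$ modulo null sets. Combined with the equality of measures, this yields the claimed $T$-invariance.

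Once the $T$-invariance is in hand, the ergodicity of $T$ with respect to Lebesgue measure immediately gives $|\mathscr{A}^{\mathbf{y}}(\Psi)| \in \{0,1\}$, completing the proof. The main obstacle, paralleling the one in \cite{BV08}, is precisely the set-theoretic containment step in the coset decomposition of $T^{-1} \mathscr{A}^{\mathbf{y}}(\Psi)$: it is not enough to check that the union of finitely many coset translates has the right measure; each translate must actually be absorbed back into $\mathscr{A}^{\mathbf{y}}(\Psi)$ modulo a null set. The rationality of $\mathbf{y}$ is what makes this absorption possible, through the congruence $k\mathbf{y} \equiv \mathbf{y}$, and this is the structural reason the present approach does not immediately extend to irrational inhomogeneous parameters.
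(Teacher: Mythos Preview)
Your choice of the map $T\colon \mathbf{x}\mapsto (b+1)\mathbf{x}\pmod 1$ and the algebraic identity $(b+1)\mathbf{y}\equiv\mathbf{y}\pmod{\mathbb{Z}^m}$ are exactly the right ingredients, and they match the paper's approach. The gap is in the invariance claim itself: $\mathscr{A}^{\mathbf{y}}_{n,m}(\Psi)$ is \emph{not} $T$-invariant modulo null sets, and the coset decomposition you describe does not produce translates of $\mathscr{A}^{\mathbf{y}}_{n,m}(\Psi/k)$. If you carry out the computation, writing $\mathbf{p}=k\mathbf{p}'+\mathbf{r}$ with $\mathbf{r}\in\{0,\dots,k-1\}^m$ and using $k\mathbf{y}=\mathbf{y}+\mathbf{a}$, the condition $|k\mathbf{qx}-\mathbf{p}-\mathbf{y}|<\Psi(\mathbf{q})$ becomes $\bigl|\mathbf{qx}-\mathbf{p}'-(\mathbf{y}+\tfrac{\mathbf{r}-\mathbf{a}}{k})\bigr|<\Psi(\mathbf{q})/k$. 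Thus each piece is a set $\mathscr{A}^{\mathbf{y}_{\mathbf r}}_{n,m}(\Psi/k)$ with the \emph{different} inhomogeneous target $\mathbf{y}_{\mathbf r}=\mathbf{y}+\tfrac{\mathbf{r}-\mathbf{a}}{k}$, not a translate in $\mathbf{x}$-space of the $\mathbf{y}$-set. For $\mathbf{r}\not\equiv\mathbf{a}\pmod k$ the shift $\tfrac{\mathbf{r}-\mathbf{a}}{k}$ is not an integer vector and cannot be ``absorbed into a fresh integer lattice point''; there is no general inclusion $\mathscr{A}^{\mathbf{y}_{\mathbf r}}_{n,m}(\Psi/k)\subseteq\mathscr{A}^{\mathbf{y}}_{n,m}(\Psi)$.

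This is precisely the obstacle that the Cassels device in \cite{BV08} is designed to circumvent, and the paper follows it: one passes to the auxiliary set $\mathscr{F}^{\mathbf{y}}_{n,m}(\Psi)=\bigcup_{j\geq 1}\mathscr{A}^{\mathbf{y}}_{n,m}(j\Psi)$, proves the \emph{forward} inclusion $T(\mathscr{F}^{\mathbf{y}}_{n,m}(\Psi))\subseteq\mathscr{F}^{\mathbf{y}}_{n,m}(\Psi)$ (the stray factor $k$ from multiplying the inequality by $k$ is now harmlessly swallowed by passing from $j\Psi$ to $kj\Psi$ inside the union), invokes \cite[Lemma~2]{BV08} to get $|\mathscr{F}^{\mathbf{y}}_{n,m}(\Psi)|\in\{0,1\}$, and then transfers back via the Cassels--Gallagher equality $|\mathscr{F}^{\mathbf{y}}_{n,m}(\Psi)|=|\mathscr{A}^{\mathbf{y}}_{n,m}(\Psi)|$ \cite[Lemma~4]{BV08}. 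Your sketch omits both the $\mathscr{F}$-set and this measure equality, and without them the argument does not close.
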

\begin{proof}
    We may assume that $\frac{\Psi(\mathbf{q})}{|\mathbf q|} \to 0$ as $|\mathbf{q}| \to \infty$ (see (9) of \cite{BV08}). Define 
    $$\mathscr{F}^\mathbf{y}_{n,m}(\Psi) = \bigcup_{k=1}^\infty \mathscr{A}^\mathbf{y}_{n,m}(k\Psi).$$
    Since \cite[Lemma 4]{BV08}, which can be traced back to \cite[Lemma 9]{Cas50}, is valid for the inhomogeneous case as well, it follows that
    $$|\mathscr{F}^\mathbf{y}_{n,m}(\Psi)| = |\mathscr{A}^\mathbf{y}_{n,m}(\Psi)|.$$
    Let $l=b+1$ and $T_l : [0,1)^{nm} \to [0,1)^{nm}$ be a function sending $\mathbf{x} \mapsto l\mathbf{x} \mod 1$. Then we have $T_l(\mathscr{F}^\mathbf{y}_{n,m}(\Psi)) \subseteq \mathscr{F}^\mathbf{y}_{n,m}(\Psi)$ since
    \begin{align*}
        \mathbf{x} \in \mathscr{F}^\mathbf{y}_{n,m}(\Psi) &\Leftrightarrow \exists k\in \mathbb{N} : \left|\mathbf{q}\mathbf{x} - \mathbf{p} - \frac{\mathbf{a}}{b}\right| < k\Psi(\mathbf{q}) \text{ for inf. many } (\mathbf{q}, \mathbf{p}) \\
        &\Leftrightarrow \exists k\in \mathbb{N} : \left|\mathbf{q}l\mathbf{x} - l\mathbf{p} - l\frac{\mathbf{a}}{b}\right| < kl\Psi(\mathbf{q}) \text{ for inf. many } (\mathbf{q}, \mathbf{p}) \\
        &\Leftrightarrow \exists k \in \mathbb{N} : \left|\mathbf{q}l\mathbf{x} - l\mathbf{p} - \mathbf{a} - \frac{\mathbf{a}}{b}\right| < kl\Psi(\mathbf{q}) \text{ for inf. many } (\mathbf{q}, \mathbf{p}) \\
        &\Rightarrow \exists k' \in \mathbb{N} : \left|\mathbf{q}T_l(\mathbf{x}) - \mathbf{p}'- \frac{\mathbf{a}}{b}\right| < k'\Psi(\mathbf{q}) \text{ for inf. many } (\mathbf{q}, \mathbf{p}') \\
        &\Rightarrow T_l(\mathbf{x}) \in \mathscr{F}^\mathbf{y}_{n,m}(\Psi).
    \end{align*}
    Therefore by \cite[Lemma 2]{BV08}, $|\mathscr{F}^\mathbf{y}_{n,m}(\Psi)| \in \{0,1\}$, thus $|\mathscr{A}^\mathbf{y}_{n,m}(\Psi)| \in \{0,1\}$.
\end{proof}
\begin{proof}[Second proof of Theorem \ref{thm2}]
    We only need to prove the divergence part. By Theorem \ref{thm5} and the divergence Borel--Cantelli lemma, we have $|\mathscr{A}^{\mathbf{y}}_{n,m} (\psi)| \geq |\mathscr{A}''^{\mathbf{y}}_{n,m} (\psi)| > 0.$ Therefore, $|\mathscr{A}^{\mathbf{y}}_{n,m} (\psi)| = 1$ by the zero-one law in Theorem \ref{thm6}.
\end{proof}


\begin{thebibliography}{10}

\bibitem{AR22}
D.~Allen and F.~A. Ramírez.
\newblock {Independence Inheritance and Diophantine Approximation for Systems of Linear Forms}.
\newblock {\em International Mathematics Research Notices}, 2023(2):1760--1794, 2023.

\bibitem{AR23}
D.~Allen and F.~A. Ramírez.
\newblock {Inhomogeneous approximation for systems of linear forms with primitivity constraints}.
\newblock {\em Mathematische Zeitschrift}, 309:10, 2025.

\bibitem{BHV24}
V.~Beresnevich, M.~Hauke, and S.~Velani.
\newblock {Borel-Cantelli, zero-one laws and inhomogeneous Duffin-Schaeffer}, 2024.
\newblock arXiv:2406.19198.

\bibitem{BV08}
V.~Beresnevich and S.~Velani.
\newblock {A note on zero-one laws in metrical Diophantine approximation}.
\newblock {\em Acta Arithmetica}, 133(4):363--374, 2008.

\bibitem{BV09}
V.~Beresnevich and S.~Velani.
\newblock {Classical Metric Diophantine Approximation Revisited: The Khintchine--Groshev Theorem}.
\newblock {\em International Mathematics Research Notices}, 2010(1):69--86, 2010.

\bibitem{Cas50}
J.~W.~S. Cassels.
\newblock {Some metrical theorems in Diophantine approximation. I}.
\newblock {\em Mathematical Proceedings of the Cambridge Philosophical Society}, 46(2):209–218, 1950.

\bibitem{CT24}
S.~Chow and N.~Technau.
\newblock {\em Littlewood and Duffin--Schaeffer-Type Problems in Diophantine Approximation}.
\newblock Memoirs of the American Mathematical Society ; Volume 296. American Mathematical Society, 2024.

\bibitem{DS41}
R.~J. Duffin and A.~C. Schaeffer.
\newblock {Khintchine’s problem in metric Diophantine approximation}.
\newblock {\em Duke Mathematical Journal}, 8:243--255, 1941.

\bibitem{Gal61}
P.~X. Gallagher.
\newblock Approximation by reduced fractions.
\newblock {\em Journal of the Mathematical Society of Japan}, 13(4):342--345, 1961.

\bibitem{Gal65}
P.~X. Gallagher.
\newblock {Metric simultaneous diophantine approximation (II)}.
\newblock {\em Mathematika}, 12(2):123–127, 1965.

\bibitem{Gro38}
A.~Groshev.
\newblock A theorem on a system of linear forms.
\newblock {\em Doklady Akademii Nauk SSSR}, 19(3):151--152, 1938.

\bibitem{HW08}
G.~H. Hardy and E.~M. Wright.
\newblock {\em An Introduction to the Theory of Numbers}.
\newblock Oxford University Press, sixth edition, 2008.

\bibitem{Hau23}
M.~Hauke.
\newblock {Quantitative inhomogeneous Diophantine approximation for systems of linear forms}.
\newblock {\em Proceedings of the American Mathematical Society}, 153:1867--1880, 2025.

\bibitem{HR24}
M.~Hauke and F.~A. Ramírez.
\newblock {The Duffin-Schaeffer conjecture with a moving target}, 2024.
\newblock arXiv:2407.05344.

\bibitem{Khi24}
A.~Khintchine.
\newblock {Einige Sätze über Kettenbrüche, mit Anwendungen auf die Theorie der Diophantischen Approximationen}.
\newblock {\em Mathematische Annalen}, 92:115--125, 1924.

\bibitem{Khi26}
A.~Khintchine.
\newblock {Zur metrischen Theorie der diophantischen Approximationen}.
\newblock {\em Mathematische Zeitschrift}, 24:706--714, 1926.

\bibitem{Ram17}
F.~A. Ramírez.
\newblock Counterexamples, covering systems, and zero-one laws for inhomogeneous approximation.
\newblock {\em International Journal of Number Theory}, 13(03):633--654, 2017.

\bibitem{Ram24}
F.~A. Ramírez.
\newblock Metric bootstraps for limsup sets, 2024.
\newblock arXiv:2405.03811.

\bibitem{Sch64}
W.~M. Schmidt.
\newblock {Metrical Theorems on Fractional Parts of Sequences}.
\newblock {\em Transactions of the American Mathematical Society}, 110(3):493--518, 1964.

\bibitem{Spr79}
V.~G. Sprindžuk.
\newblock {\em {Metric theory of Diophantine approximations}}.
\newblock Scripta Series in Mathematics. V. H. Winston \& Sons, Washington, D. C., 1979.
\newblock Translated and edited by Richard A. Silverman.

\bibitem{Szu58}
P.~Sz\"usz.
\newblock {Über die metrische Theorie der diophantischen Approximation}.
\newblock {\em Acta Mathematica Academiae Scientiarum Hungarica}, 9:177--193, 1958.

\bibitem{Yu21}
H.~Yu.
\newblock {On the metric theory of inhomogeneous Diophantine approximation: An Erdős-Vaaler type result}.
\newblock {\em Journal of Number Theory}, 224:243--273, 2021.

\end{thebibliography}
\end{document}